\title{
	Adaptive posterior contraction rates for  empirical Bayesian drift estimation of a diffusion %
}
\author{Jan van Waaij\thanks{University of Padova, Department of Statistics, \href{mailto:jvanwaaij@gmail.com}{jvanwaaij@gmail.com}. The author is supported by the University of Padova under the STARS Grant. This paper is an expanded and improved version of chapter five of the author's PhD-thesis (\cite{waaij2018}). The author is happy to receive feedback.}}
\begin{document}	
\begin{abstract}
Due to their conjugate posteriors,
Gaussian process priors are attractive
 for estimating the drift of 
 stochastic differential equations
 with continuous time observations. 
 However, their performance strongly 
 depends on the choice of the hyper-parameters. 
 We employ the marginal maximum likelihood estimator to estimate the scaling and/or smoothness parameter(s) of the prior and show that the corresponding posterior has optimal rates of convergence. General theorems do not apply directly to this model as the usual test functions are with respect to a random Hellinger-type metric.

We allow for continuous and discrete, one- and two-dimensional sets of hyper-parameters, where optimising over the two-dimensional set of smoothness and scaling hyper-parameters is shown to be beneficial in terms of the adaptive range. 
\end{abstract}

\subsubsection*{Summary}

We assume a continuous time observation \(X^T=(X_t:t\in[0,T]), T>0\), which is a realisation of the stochastic differential equation (SDE) \(X_t=\theta_0(X_t)dt+dW_t,\) with \(W_t\) a Brownian motion. We are interested in estimating \(\theta_0\), which is only assumed to be 1-periodic, zero-mean (\(\int_0^1\theta_0(x)dx=0\)) and \(\beta\)-Sobolev smooth for some unknown \(\beta>0\). We equip the space of 1-periodic \(L^2\)-functions (on \([0,1]\)) with a Gaussian process prior, given by \(s\sum_{k=1}^{\floor T}k^{-\alpha-1/2}Z_k\phi_k\), where the hyper-parameters \(s\) and/or \(\alpha\) are estimated from the data using the marginal maximum likelihood estimator. Inference is done with a posterior. Fixing \(s\) and optimising over \(\alpha\in A=[1/2+\delta,\sqrt{\log T}]\) (for a small fixed constant \(\delta>0\)) leads to the optimal rate of posterior convergence \(T^{-\frac\beta{1+2\beta}}\), as long as the smoothness \(\beta\) of the true parameter is at least \(1/2+\delta\). Fixing \(\alpha>1/2\) and optimising over \(s\in S(\alpha)=\vh{T^{-\frac1{4+4\alpha}},T^\alpha}\) leads to optimal rates as long as \(0<\beta\le \alpha+1/2\). Optimising over \(\alpha\in A\) and \(s\in S(\alpha)\) simultaneously leads to optimal rates for all \(\beta>0\). 
The same results are obtained when \(A\) and \(S(\alpha)\) are replaced with a finite discrete grid that is `dense' enough in \(A\) and \(S(\alpha)\), respectively. 
For the proof we use the approach of \cite{donnetrivoirardrousseauscricciolo2014} and        \cite{rousseauszabo2017}, adapted to the SDE model. 

	\section{Introduction}
	
 Nonparametric Bayesian procedures for stochastic differential equations (SDEs) have recently received significant attention, which is partly motivated by the many applications SDEs have in science and economics (e.g. \cite{ditlevsenditlevsenandersen2002}, \cite{gottwald_crommelin_franzke_2017}  climate research,  \cite{Hindriks} neurobiology, and \cite{karatzasshreve1998}, \cite{gourierouxea2017} finance), and the many fast numerical schemes that have been developed for Bayesian nonparametric procedures for SDEs, see \cite{Papa}, \cite{Moritz}, \cite{meulenschauerzanten2017}, \cite{vandermeulenschauer2017} and \cite{meulenschauer2018}, for example, and the Yuima R-package \cite{yuima} for statistical software and the Julia-packages \cite{SchauderJulia} and  \cite{waaijJulia}. 

	This motivates the study of the 
	theoretical performance of these 
	nonparametric Bayesian 
	procedures. Posterior consistency 
	or posterior contraction rates for 
	nonparametric Bayesian models for 
	diffusions are examined in 
	\cite{meulen2006}, 
	\cite{PanzarZanten2009}, 
	\cite{meulenzanten2013}, 
	\cite{gugushvilispreij2014}, 
	\cite{waaijzanten2017}, 
	\cite{nicklray2018}, 
	\cite{meulenschauerwaaij2018}, and 
	\cite{koskelaspanojenkins2019}. To 
	the best of the author's knowledge, no 
	attention has been given to 
	empirical Bayes methods for 
	diffusions and their asymptotic 
	properties, which is the subject of 
	the current study. 
	
	We assume a given continuous time observation  \(X^T=\rh{X_t:t\in[0,T]}, T>0\) that is a weak solution to the stochastic differential equation 
	\begin{align}
		dX_t=\theta_0(X_t)dt+dW_t,\label{eq:sde}
	\end{align}
	where \(W_t\) is a Brownian motion (BM) and the unknown parameter \(\theta_0\) is a one-periodic real-valued function on \(\re\) with \(\int_0^1\theta(x)dx=0\)  and \(\int_0^1\theta_0(x)^2dx<
	\infty\). Equivalently, \(\theta_0\) can be seen as a measurable square integrable function on the unit circle, with zero mean. The space of this functions is denoted by \(\mathring L^2(\TT)\). 
	
	In molecular dynamics, \cref{eq:sde} (\cite{Papa}) is used to model the angle between atoms in a molecule. \cite{Papa} proposes a Gaussian process prior on the unknown drift and a numerical scheme to sample from the posterior is given. In \cite{PSZ}, posterior consistency is shown for this prior. In \cite{waaijzanten2016}, minimax posterior convergence rates are shown for the prior whose law is defined by the random function
	\begin{equation}\label{eq:seriesexpansion}
		\theta=s\sum_{k=1}^\infty k^{-1/2-\alpha}Z_k\phi_k, \quad Z_k\stackrel{\text{i.i.d.}}{\sim}N(0,1),
	\end{equation}
	for fixed positive \(s\) and \(\alpha\) and when \(\theta_0\) is \(\alpha\)-Sobolev smooth. In addition, if \(s\) is equipped with a specific (hyper)prior, then adaptation with minimax rates (up to a multiplicative constant) are shown as long as the Sobolev-smoothness of the true parameter is at most \(\alpha+1/2\), with fixed \(\alpha\). Posterior convergence rates are shown for every Sobolev smoothness when \(\alpha\) is equipped with a certain hyperprior, and \(s\) is set to one. General conditions for posterior contraction for continuously observed diffusions with several examples are examined in \cite{meulen2006}. Nonparametric Bayesian procedures with discrete observations of \cref{eq:sde} are considered in \cite{nicklsohl2017} (low frequency regime) and \cite{abraham2019} (high frequency regime). Nonparametric drift estimation is studied from the frequentist perspective in \cite{hoffmann1999}, \cite{spokoiny2000}, \cite{zanten2001},  \cite{dalalyankutoyants2002}, \cite{gobethoffmannreiss2004}, \cite{dalalyan2005}, and  \cite{comtegenoncatalotrozenholc2007}, among others.  

	Going back to the Bayesian procedure, one quite often sees that the hyperprior destroys the appealing numerical properties of the prior as the hierarchical prior is no longer conjugate, for example. Therefore, in practise, one often relies on  empirical Bayes methods, in which the hyper-parameters of the prior are estimated from the data and then plugged into the prior. Inference is done via the corresponding posterior.
	
General methods to study posterior contraction for empirical Bayes procedures in different models with the marginal maximum likelihood estimator (MMLE or MML) as an estimator for the hyper-parameter(s) are examined in \cite{donnetrivoirardrousseauscricciolo2014}, \cite{rousseauszabo2017}, and \cite{martinwalker2019}. We follow the approach of \cite{donnetrivoirardrousseauscricciolo2014} and        \cite{rousseauszabo2017}, the difference in the current study is that the tests to distinguish between the  corresponding drift functions are in the random Hellinger metric (as introduced in \cite{meulen2006}), which is with high probability equivalent to the \(L^2\)-metric. 
 The basic idea is first to determine a `good set' of hyper-parameters that contain the  MMLE with high probability, 
 and that the corresponding posteriors associated with this good set converge uniformly with the desired 
 rate, under the true parameter. We consider  MML estimators of the smoothness and scaling parameters, as well as an MML estimator for two-dimensional hyper-parameters \(\lambda=(\alpha,s )\) simultaneously. Hyper-parameters in different 
 parts of the hyper-parameter space may lead to optimal rates; for example, for rough truths, small \(\alpha\) and 
 moderate \(s\), or larger \(\alpha\) and larger \(s\) values. This may lead to large or `strange' formed sets 
 \(\Lambda_0\) of good parameters. The general theorem of \cite{rousseauszabo2017} 
 allows for two and higher dimensional hyper-parameters, but they do not provide an example of such a prior. To the best of our knowledge, we are the first to do so using their approach. 
However, we find that this does not lead to complications for the analysis, but it does improve the results as it 
leads to an optimal rate for every \(\beta\)-Sobolev smooth function for all \(\beta>0\), where the (proven) adaptive 
range is smaller when we only optimise over \(\alpha\) or \(s\) with the other hyper-parameter fixed.

	\section{The marginal maximum likelihood estimator and the empirical posterior}
	
	For a family of priors \(\set{\Pi_\lambda:\lambda\in\Lambda}\), we define the marginal likelihood as \begin{align*}
		\int p^\theta(X^T)d\Pi_\lambda(\theta),
	\end{align*}
	where \begin{align*}
		 p^\theta (X^T)=\expa{\int_0^T \theta(X_t)dX_t-\frac12 \int_0^T\theta(X_t)^2dt}
	\end{align*}
	is the density 	relative to the Wiener measure. 
	
	The maximum marginal likelihood estimator (MMLE) is a \(\hat \lambda\) in \(\Lambda\) that maximises the marginal  likelihood, 
	\begin{equation}\label{eq:MMLE}
		\hat\lambda=\arg\max_{\lambda\in \Lambda}\int p^\theta(X^T)d\Pi_\lambda(\theta).
	\end{equation}
	As such maximum may not exist, or is difficult to find exactly, we also allow any choice of \(\hat\lambda\) in \(\Lambda\) for which \begin{equation}\label{eq:alternativechoiceforhatlambda}
\int p^\theta(X^T)d\Pi_{\hat \lambda}(\theta)\ge \frac12\sup_{\lambda\in \Lambda}\int p^\theta(X^T)d\Pi_\lambda(\theta).	
	\end{equation}
	Obviously, a \(\hat\lambda\) satisfying \cref{eq:MMLE} also satisfies \cref{eq:alternativechoiceforhatlambda}, so we work with \(\hat\lambda\) satisfying \cref{eq:alternativechoiceforhatlambda} from now on. The choice for the factor \(1/2\) is, of course, somewhat arbitrary, but is chosen to keep the analysis and notation simple.
	
	For fixed \(\lambda\), the posterior of a measurable set \(A\subseteq \mathring L^2(\TT)\) is given by
	\begin{align}\label{eq:posterior}
		\Pi_\lambda(A\mid X^T)=\frac{\int_A  p^\theta(X^T)d\Pi_\lambda(\theta)}{\int  p^\theta(X^T)d\Pi_\lambda(\theta)}.
	\end{align}
	The posterior is well defined, see \cite[lemma 3.1]{waaijzanten2016}.
We study posterior convergence rates of \(\Pi_{\hat\lambda}(\sdot\mid X^T)=\Pi_{\lambda}(\sdot\mid X^T)\Big|_{\lambda=\hat\lambda}\).  We refer to \(\Pi_{\hat\lambda}(\sdot\mid X^T)\) as the empirical posterior, and to \(\Pi_{\hat\lambda}(\sdot)\) as the empirical prior. 

The family of Gaussian priors we consider is displayed below, where \(\{\phi_k\}_{k=1}^\infty\) is the orthogonal Fourier basis of \( \mathring  L^2(\TT)\) defined by \(		\phi_{2k-1}(x)=\sqrt2\sin(2\pi kx)\), \( 
		\phi_{2k}(x)=\sqrt2\cos(2\pi k x),\)  \(k\in \NN 
	\) 
 and \(\set{Z_i:i\in \NN}\) are independent standard normally distributed random variables. The prior \(\Pi_\lambda\) is defined as the law of the random process defined by \begin{equation}
 t\mapsto s\sum_{ k=1}^{\floor T} k^{-\alpha -1/2}Z_k\phi_k(t),
 \end{equation}
 where \( \lambda= (\alpha, s)\) is an element of \(\Lambda \), which is a subset of  \begin{equation}\label{eq:defLambda}
\Lambda_1:=\ah{(\alpha,s):1/2+\delta\le \alpha\le \sqrt{\log T},\quad T^{-\frac1{4+4\alpha}}\le s\le  T^\alpha},  
 \end{equation}
 and where \(\delta>0 \) is a small fixed constant. The condition that \(\alpha\) is bounded away from \(1/2\) is found to be necessary in our approach to derive a test that distinguishes between drift functions. It would be worthwhile if future research could alleviate this condition. 
For ease of  notation, we suppress the dependence on \(T\).

We say that the posterior contracts with rate \(\eps_T\downarrow 0\), as \(T\to\infty\), when for some constant \(G>0\) \begin{align*}
		\Pi_{\hat\lambda}\rh{\theta:\|\theta-\theta_0\|\le G\eps_T\mid X^T}\to 1,
	\end{align*}
	in \(\P^{\theta_0}\)-probability as \(T\to\infty\).

	We obtain the following result: 
	\begin{theorem}\label{thm:posteriorcontractiontheorem}
Let \(\theta_0\) be \(\beta\)-Sobolev smooth, that is, \(\theta_0\in \mathring L^2(\TT)\) is of the form \(
\theta_0=\sum_{ k=1}^\infty \theta_k\phi_k \) and satisfies \(\sum_{ k=1}^\infty k^{2\beta}\theta_k^2<\infty. 
\) Then the posterior contracts with rate \( T^{-\frac\beta{1+2\beta}}, \) when
\begin{enumerate}
	\item \(\Lambda = \set{(\alpha,s):T^{-\frac1{4+4\alpha}}  \le s\le T^\alpha }\), for some fixed \(\alpha>1/2\), and \( \beta\le \alpha+1/2,\) \emph{or},
	\item \(\Lambda = \set{(\alpha,s): 1/2+\delta \le \alpha \le \sqrt{\log T}}\) for some fixed \(s>0\), and when \(\beta\ge 1/2+\delta\), \emph{or},
	\item \(\Lambda=\Lambda_1\) and \(\beta>0. \)
\end{enumerate}
The same results hold when \(\Lambda\) in each of the three cases is replaced by the discrete sets \begin{enumerate}[1']
\item \(\Lambda=\set{(\alpha, lT^{-\frac1{4+4\alpha}}):l\in \NN, 1\le l\le T^{\alpha+\frac1{4+4\alpha}}},\) for some fixed \(\alpha>1/2\)
\item \(\Lambda=\set{(k/\log T, s):k\in\NN,(1/2+\delta)\log T\le k\le (\log T)^{3/2}}\), for some fixed \(s>0\),
\item \(\Lambda=\Bigl\{(k/\log T, lT^{-\frac1{4+4k/\log T}}):k,l\in\NN,(1/2+\delta)\log T\le k\le (\log T)^{3/2},\\ \phantom{\Lambda=\Bigl\{(k/\log T, lT^{-\frac1{4+4k/\log T}}):k,l\in\NN,(1/2+\delta) 
}1\le l\le T^{k/\log T+\frac1{4+4k/\log T}}\Bigr\}\),
\end{enumerate} 
respectively.   
\end{theorem}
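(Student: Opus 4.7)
The plan is to follow the empirical-Bayes framework of \cite{donnetrivoirardrousseauscricciolo2014} and \cite{rousseauszabo2017}, adapted to the diffusion model by replacing the usual Hellinger tests with the random-Hellinger tests of \cite{meulen2006}. The argument decomposes into two pieces: (i) for each true smoothness $\beta$ one identifies a deterministic \emph{good set} $\Lambda_0 = \Lambda_0(\beta) \subseteq \Lambda$ on which the non-empirical posterior $\Pi_\lambda(\cdot \mid X^T)$ contracts at the target rate $\eps_T = T^{-\beta/(1+2\beta)}$ uniformly in $\lambda$; (ii) one shows $\P^{\theta_0}(\hat\lambda \in \Lambda_0) \to 1$. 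Combining the two via $\Pi_{\hat\lambda}(\cdot \mid X^T) = \Pi_\lambda(\cdot \mid X^T)|_{\lambda = \hat\lambda}$ yields the theorem.

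For step (i) I would rely on the concentration-function computation for the Gaussian series prior $s \sum_{k \le \floor T} k^{-\alpha-1/2} Z_k \phi_k$ already carried out in \cite{waaijzanten2016}. Balancing the RKHS penalty against the bias at a $\beta$-Sobolev truth singles out an admissible band of hyper-parameters on which the fixed-$\lambda$ rate equals $T^{-\beta/(1+2\beta)}$: in case (1) an interval of $s$-values centred at $s^{\star}(\beta) \asymp T^{(\alpha-\beta)/(1+2\beta)}$, which lies inside $[T^{-1/(4+4\alpha)}, T^\alpha]$ precisely when $\beta \le \alpha+1/2$; in case (2) an interval of $\alpha$-values around $\beta$, contained in $[1/2+\delta, \sqrt{\log T}]$ as soon as $\beta \ge 1/2+\delta$; in case (3) the two-parameter union of such bands, which sweeps out every $\beta > 0$. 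Uniform contraction across $\Lambda_0$ then follows from the classical prior-mass / entropy / tests triple, with the tests taken in the random Hellinger metric of \cite{meulen2006}.

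Step (ii) follows the strategy of \cite{rousseauszabo2017}. Writing the evidence ratio $m_\lambda(X^T) = \int (p^\theta / p^{\theta_0})(X^T) \, d\Pi_\lambda(\theta)$, it suffices to exhibit a reference $\lambda^{\star} \in \Lambda_0$ and a $\P^{\theta_0}$-high-probability event on which $m_\lambda(X^T) \le e^{-c T \eps_T^2} m_{\lambda^{\star}}(X^T)$ for every $\lambda \in \Lambda \setminus \Lambda_0$: definition \cref{eq:alternativechoiceforhatlambda} of $\hat\lambda$ then precludes $\hat\lambda \notin \Lambda_0$. The lower bound on $m_{\lambda^{\star}}$ comes from the usual Kullback--Leibler prior-mass estimate near $\theta_0$. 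For the upper bound on $m_\lambda$ with $\lambda \notin \Lambda_0$ I would split the integral according to $\|\theta - \theta_0\|$: far from $\theta_0$ the random-Hellinger tests yield exponentially small contributions, while close to $\theta_0$ the mass that $\Pi_\lambda$ assigns is small precisely because $\lambda$ lies outside the admissible band.

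The main obstacle is that the tests separating drifts live in the random metric $h_X(\theta,\theta')^2 = T^{-1} \int_0^T (\theta(X_t)-\theta'(X_t))^2 \, dt$, which is only equivalent to $\|\cdot\|_{L^2}$ on the event $E_T$ that the occupation measure of $X$ is comparable to Lebesgue measure on $[0,1]$. All of the evidence comparisons and uniform posterior bounds must therefore be run on $E_T$, the probability of whose complement (absorbed into the final bound) has to be shown to vanish uniformly in the relevant quantities; this is where the lower bound $\alpha \ge 1/2 + \delta$ enters, since it is needed to construct suitable tests on $E_T$. Finally, the discrete variants $(1')$--$(3')$ reduce to the continuous cases once one verifies that every admissible band $\Lambda_0(\beta)$ contains a grid point: the multiplicative spacing $T^{-1/(4+4\alpha)}$ in $s$ and the additive spacing $1/\log T$ in $\alpha$ are calibrated so that this holds for every $\beta$ in the relevant range, after which the uniform argument of step (i) applies verbatim.
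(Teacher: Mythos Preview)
Your outline matches the paper's approach closely: both follow the \cite{rousseauszabo2017} scheme, define a good set $\Lambda_0$ via the implicit rate $\eps_\lambda$ solving $\Pi_\lambda(\|\theta-\theta_0\|<K\eps_\lambda)=e^{-T\eps_\lambda^2}$, show $\hat\lambda\in\Lambda_0$ with high probability, and then prove uniform contraction over $\Lambda_0$ using the random-Hellinger tests of \cite{meulen2006} on the event $E$ where $h$ and $\|\cdot\|_2$ are equivalent. You also correctly locate where $\alpha\ge 1/2+\delta$ is used (the entropy bound underlying the tests).

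There is, however, a genuine technical step you have not addressed. In your step~(ii) you need the evidence bound $m_\lambda(X^T)\le e^{-cT\eps_T^2}m_{\lambda^\star}(X^T)$ to hold \emph{simultaneously for every} $\lambda\in\Lambda\setminus\Lambda_0$, and likewise in step~(i) the posterior bound must hold uniformly over $\lambda\in\Lambda_0$. Since $\Lambda$ is a continuum, the test-plus-small-ball argument you describe controls one $\lambda$ at a time and does not directly give a supremum. The paper handles this by covering $\Lambda_0^c$ (and $\Lambda_0$) with at most $e^{5(\log T)^{3/2}}$ small cells $I_h$ centred at points $\lambda_h$, and on each cell introducing an explicit measurable transport map $\Phi_\lambda:\operatorname{supp}\Pi_{\lambda_h}\to\operatorname{supp}\Pi_\lambda$ that pushes $\Pi_{\lambda_h}$ forward to $\Pi_\lambda$. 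The supremum over $I_h$ then sits \emph{inside} a single integral against $\Pi_{\lambda_h}$, and one must bound $\E_\theta\bigl[\sup_{\lambda\in I_h}(\bar p^{\Phi_\lambda(\theta)}/\bar p^\theta)^2\I_E\bigr]$; this is done via a Bernstein-type martingale inequality and is precisely where the truncation of the series at $\lfloor T\rfloor$ basis functions is needed (the bound $|1-(s/s_h)k^{\alpha_h-\alpha}|\lesssim |\alpha-\alpha_h|\log k$ would blow up for an infinite series). Without this discretisation-plus-transformation device your argument for the continuous cases (1)--(3) has a gap; the discrete cases (1$'$)--(3$'$) are easier exactly because the union bound over $\Lambda$ is then finite and no $\Phi_\lambda$ is required.

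A minor point: the spacing in $s$ for the discrete grids is additive ($s=lT^{-1/(4+4\alpha)}$), not multiplicative as you wrote; this does not affect the argument that each admissible band contains a grid point.
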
Note that similar results are obtained in \cite{waaijzanten2016} 
for a hyperprior on the scaling (case 1) and a hyper-parameter on the 
smoothness parameter (case 2), with the difference that in the latter case adaptivity for all 
\(\beta>0 \) is obtained (\cite[theorem 3.4]{waaijzanten2016}), which, in our study, is only 
obtained when \(\alpha \) and \(s\)  are estimated at the same time.
		
	\section{The proof of \cref{thm:posteriorcontractiontheorem}}\label{sec:outlineoftheproof}

	Using the equivalence of the measures \(P^\theta\), for fixed \(T\) (see \cite[lemma 13]{meulenschauerwaaij2018}), the posterior \cref{eq:posterior} can be written as 
	\begin{align*}
	\Pi_\lambda(A\mid X^T)=&\frac{\int_A p^\theta(X^T)/p^{\theta_0}(X^T)d\Pi_\lambda(\theta)}{\int p^\theta(X^T)/p^{\theta_0}(X^T)d\Pi_\lambda(\theta)}	,
	\intertext{and}
	\bar p^\theta(X^T):=&p^\theta(X^T)/p^{\theta_0}(X^T)\\
	=&\expa{\int_0^T(\theta(X_t)-\theta_0(X_t))dW_t-\frac12\int_0^T(\theta(X_t)-\theta_0(X_t))^2dt}
	\end{align*}
	is the density of \(P^\theta\) relative to \(P^{\theta_0}\).

	Since \(\bar p^\theta(X^T)=p^\theta(X^T)/p^{\theta_0}(X^T)\) differs from \(p^\theta(X^T)\) only by a positive multiplicative constant not depending on \(\theta\) or \(\lambda\), the \(\lambda\) that satisfies \cref{eq:alternativechoiceforhatlambda} satisfies also  \cref{eq:alternativechoiceforhatlambda} with \(p^\theta\) replaced by \(\bar p^\theta\).

	For technical reasons, we introduce a positive constant \(K\) (which does not depend on \(\lambda\) or \(T\), but may depend on \(\theta_0\)). It is easily shown that for some  unique \(\eps_\lambda\) (dependent on \(K, T\) and \(\theta_0 \))
	
	\begin{align}
		\Pi_\lambda(\|\theta-\theta_0\|_2<K\eps_\lambda)=e^{-T\eps_\lambda^2}.\label{eq:definingpropertyepslambda}
	\end{align}
	Let \(\eps_0=\inf_{\lambda\in\Lambda} \eps_\lambda\). Intuitively, we can think of the prior \(\Pi_\lambda\) that minimises \(\eps_\lambda\) as one that most favours the true parameter out of all priors, by setting the maximum prior mass around \(\theta_0\) 
	and is therefore expected to give the best possible rates of posterior convergence.
	
	For a constant \(M\ge 2\), we define 
	\begin{align}\label{eq:definitionLambdazero}
		\Lambda_0=\set{\lambda\in\Lambda:\eps_\lambda\le M\eps_0}
	\end{align}
	as the set of all  \(\lambda\in\Lambda\) where the prior \(\Pi_\lambda\) puts a significant amount of prior mass around \(\theta_0\). Note that this set is non-random but does depend on the unknown true parameter \(\theta_0\).
	In the next section, \cref{sec:hatlambdainlambdazerowithprobabilityconvergingtoone}, we prove the existence of a sequence of events \(F\) (depending on \(T\)), which asymptotically have \(\P^{\theta_0}\)-probability one, and on which \(\hat\lambda\in \Lambda_0\). Obviously, for a constant \(H>1\), we have
	\begin{align*}
		&\E_{\theta_0}\left[\Pi_{\hat\lambda}(\|\theta-\theta_0\|_2\ge HM\eps_0  \mid X^T)\right]
		\le \E_{\theta_0}\left[\sup_{\lambda\in\Lambda_0}\Pi_{\lambda}(\|\theta-\theta_0\|_2\ge HM\eps_0  \mid X^T)\II_ F\right]+\P^{\theta_0}(F^c). 
	\end{align*}
	
	In \cref{sec:posteriorconvergencegivenhatlambdainLambdazero} we show that for sufficiently large \(H\), \[\E_{\theta_0}\left[\sup_{\lambda\in\Lambda_0}\Pi_{\lambda}(\|\theta-\theta_0\|_2\ge HM\eps_0  \mid X^T)\I_F\right]\to0\]
	as \(T\to\infty\). For each respective choice of \(\Lambda\) in \cref{thm:posteriorcontractiontheorem}, the upper bounds for \(\eps_0\) in \cref{sec:upperandlowerboundsforepslampda} then allows us to conclude the theorem. 
	
	\section{The asymptotic behaviour of the marginal maximum likelihood estimator}\label{sec:hatlambdainlambdazerowithprobabilityconvergingtoone} 
	
	In \cite[theorem 4.1]{PSZ}, it is shown that for a high probability event the Hellinger distance 
	\begin{align*}
		h(\theta,\theta')= \sqrt{\int_0^T(\theta(X_u)-\theta'(X_u))^2du}
	\end{align*}
	and the square integrable (\(L^2\)-) norm is equivalent. To be precise, for any constants \(0<c_\rho<1<C_\rho\) satisfying 
	
	\begin{align*}
		c_\rho^2 <& \inf_{x\in[0,1]}\rho(x)\le \sup_{x\in[0,1]}\rho(x)<C_\rho^2
		\intertext{with}
		\rho(x)=&\frac{\expa{2\int_0^x\theta_0(y)dy}}{\int_0^1\expa{2\int_0^y\theta_0(z)dz}dy}
	\end{align*}
	the \(P^{\theta_0}\)-probability of the event 
	\begin{equation}\label{eq:definitionofE}
		E=\set{c_\rho\sqrt T\|\theta-\theta'\|_2\le h(\theta,\theta')\le C_\rho\sqrt T\|\theta-\theta'\|_2,\forall \theta,\theta'\in \mathring L^2(\TT)}
	\end{equation}
converges to 1, as \(T\to\infty\).

	 Let \(A<B\) be positive constants. Define the event (which depends on \(T\))  
	 \begin{align*}
	 	F:=& E\cap\set{\sup_{\lambda\in \Lambda_0^c}\int \bar p^\theta(X^T)d\Pi_\lambda(\theta)\le e^{-BT\eps_0^2}}
	 	\cap \set{\sup_{\lambda\in\Lambda_0}\int \bar p^\theta(X^T)d\Pi_{\lambda}(\theta)\ge e^{-AT\eps_0^2}},
	 \end{align*}
	 where \(\Lambda_0^c=\Lambda\weg\Lambda_0\) is the complement of \(\Lambda_0\) in \(\Lambda\). 
	 
	 As \(T\eps_0^2\to\infty\) (see \cref{lem:Tepszerosquaredconvergestoinfinity}) and \(A<B\),  it follows that for large \(T\), in the event \(F\),
	 \begin{align*}
	 \int \bar p^{\theta}(X^T)d\Pi_{\hat \lambda}(\theta)\ge \frac12 \sup_{\lambda\in\Lambda}\int \bar p^\theta(X^T)d\Pi_{\lambda'}(\theta)
	 \ge \frac12\sup_{\lambda\in\Lambda_0}\int \bar p^\theta(X^T)d\Pi_{\lambda}(\theta)\ge \frac12e^{-AT\eps_0^2}> e^{-BT\eps_0^2}. 
	 \end{align*}
	Which implies that for large \(T\), in the event \(F\),  the estimator \(\hat\lambda\) is in the set of good hyper-parameters \(\Lambda_0\).

	 \begin{theorem}\label{thm:hatlambdainafavourableset}
	 	There are constants \(0<A<B\), so that when \(M\) in \cref{eq:definitionLambdazero} is sufficiently large, \(P^{\theta_0}(F)\to1\) as \(T\to\infty\). Moreover, for every \(\lambda'\in\Lambda_0\) satisfying \(\eps_\lambda\le2\eps_0\), we have in the event \(F\) \[\int \bar p^\theta(X^T)d\Pi_{\lambda'}(\theta)\ge e^{-AT\eps_0^2}.\]
	 \end{theorem}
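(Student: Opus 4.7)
Since \(P^{\theta_0}(E)\to 1\) by \cite{PSZ}, the plan is to show that both remaining events in the definition of \(F\) have probability tending to one, and simultaneously to establish the second assertion (the evidence lower bound for any \(\lambda'\in\Lambda_0\) with \(\eps_{\lambda'}\le 2\eps_0\)).

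For the lower bound event and the second assertion, I would pick any \(\lambda'\in\Lambda\) with \(\eps_{\lambda'}\le 2\eps_0\), which exists because \(\eps_0\) is an infimum and which lies in \(\Lambda_0\) since \(M\ge 2\). The defining property of \(\eps_{\lambda'}\) gives \(\Pi_{\lambda'}(\|\theta-\theta_0\|_2<K\eps_{\lambda'})=e^{-T\eps_{\lambda'}^2}\ge e^{-4T\eps_0^2}\). The idea is then to apply an SDE-adapted prior-mass lemma in the spirit of \cite{meulen2006} and \cite{waaijzanten2016}: for \(\theta\) in this ball, \(\log\bar p^\theta(X^T)\) is the centred stochastic integral \(\int_0^T(\theta-\theta_0)(X_t)\,dW_t\) minus \(\tfrac12 h(\theta,\theta_0)^2\), and on \(E\) the latter is at most \(\tfrac12 C_\rho^2 T K^2\eps_{\lambda'}^2\). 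Fubini combined with It\^o isometry and a Chebyshev bound (or, for Gaussian priors, a direct moment computation) controls the martingale contribution, yielding \(\int \bar p^\theta(X^T)\,d\Pi_{\lambda'}(\theta)\ge e^{-AT\eps_0^2}\) on an event of probability tending to one, for a constant \(A\) depending only on \(C_\rho\) and \(K\).

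For the upper bound event, for each \(\lambda\in\Lambda_0^c\) we have \(\eps_\lambda>M\eps_0\), and I would split the evidence into a near-ball contribution and a far contribution. For the near piece, since \(\E^{\theta_0}\bar p^\theta=1\), Fubini gives \(\E^{\theta_0}\int_{\|\theta-\theta_0\|_2<K\eps_\lambda}\bar p^\theta\,d\Pi_\lambda\le \Pi_\lambda(\|\theta-\theta_0\|_2<K\eps_\lambda)=e^{-T\eps_\lambda^2}\le e^{-M^2 T\eps_0^2}\), and Markov upgrades this to a high-probability bound. For the far piece, the plan is to construct a sieve with controlled entropy on the support of \(\Pi_\lambda\) (exploiting the polynomial-decay structure of the series prior as in \cite{waaijzanten2016}) and use tests in the random Hellinger metric, which on \(E\) are equivalent to \(L^2\)-tests. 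Uniformity over \(\lambda\in\Lambda_0^c\) is then obtained by discretising \(\Lambda\) on a grid of polynomial-in-\(T\) cardinality and using monotonicity/continuity of \(\lambda\mapsto\int\bar p^\theta\,d\Pi_\lambda\) coordinate-wise to fill the gaps, combined with a union bound. Choosing \(M\) sufficiently large then permits a constant \(B>A\) in the definition of \(F\).

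The main obstacle will be this uniform control over \(\lambda\in\Lambda_0^c\): \(\Lambda\) may be two-dimensional with boundaries growing in \(T\) (\(\alpha\le\sqrt{\log T}\), \(s\le T^\alpha\)), and one must verify that the entropy of the sieves and the test exponents dominate \(BT\eps_0^2\) simultaneously for every grid point, even though \(T\eps_0^2\) is only poly-logarithmic for smooth \(\theta_0\). The randomness of the Hellinger metric \(h\) forces all tests to be made on \(E\), which is precisely why \(E\) is included in \(F\); verifying the test bounds on this event with the correct constants for the two-dimensional hyper-parameter space is the delicate part of the argument.
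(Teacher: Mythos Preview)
Your treatment of the lower-bound event is essentially what the paper does: pick \(\lambda'\) with \(\eps_{\lambda'}\le 2\eps_0\), use the prior-mass condition and an evidence lower bound for SDEs (\cite[lemma 4.2]{meulen2006}) on \(E\). That part is fine and delivers \(A=4(C_\rho^2K^2+1)\).

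The upper-bound event is where your plan diverges from the paper and where the real gap lies. You propose a polynomial-in-\(T\) grid and then ``monotonicity/continuity of \(\lambda\mapsto\int\bar p^\theta\,d\Pi_\lambda\)'' to fill the gaps. Neither piece works as stated. First, the marginal likelihood is \emph{not} monotone in \(\alpha\) or in \(s\), so you cannot sandwich the supremum over a cell by its value at a corner; bare continuity gives no quantitative control. Second, a polynomial grid is far too coarse: to get any uniform control the paper uses cells of diameter at most \(T^{-1.5\sqrt{\log T}-1.5}/\log T\) in \(\alpha\) and \(T^{-1.5\sqrt{\log T}-10/6}\) in \(s\), giving \(N\le e^{5(\log T)^{3/2}}\) cells. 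The bridge between grid points is not monotonicity but a \emph{transformation} \(\Phi_\lambda\) mapping \(\Pi_{\lambda_h}\) to \(\Pi_\lambda\) (simply rescaling each coefficient by \((s/s_h)k^{\alpha_h-\alpha}\)), followed by Cauchy--Schwarz to separate
\[
\int \E_\theta\Bigl[\sup_{\lambda\in I_h}\bigl(\bar p^{\Phi_\lambda(\theta)}/\bar p^\theta\bigr)^2\,\I_E\Bigr]\,d\Pi_{\lambda_h}(\theta)
\quad\text{from}\quad
\int \E_\theta\bigl[(1-\varphi)\I_E\bigr]\,d\Pi_{\lambda_h}(\theta).
\]
The second factor is handled by the tests (plus the defining relation \(\Pi_{\lambda_h}(\|\theta-\theta_0\|<K\eps_{\lambda_h})=e^{-T\eps_{\lambda_h}^2}\) and the sieve mass bound). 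The first factor is the crux: it is controlled by writing \(\log(\bar p^{\Phi_\lambda(\theta)}/\bar p^\theta)\) as a stochastic integral against \(\Phi_\lambda(\theta)-\theta\), bounding \(\|\Phi_\lambda(\theta)-\theta\|_2\) by \(2T^{-1.5\sqrt{\log T}-1.5}\|\theta\|_2\) thanks to the very fine grid (this is exactly where the truncation at \(\lfloor T\rfloor\) basis functions is used), and applying Bernstein's inequality. A tail bound on \(\|\theta\|_2\) under \(\Pi_{\lambda_h}\) then integrates this out.

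Your worry that ``\(T\eps_0^2\) is only poly-logarithmic for smooth \(\theta_0\)'' is misplaced. The constraints \(\alpha\le\sqrt{\log T}\) and \(s\ge T^{-1/(4+4\alpha)}\) combined with the lower bound for \(\eps_\lambda\) force \(T\eps_0^2\gtrsim e^{\tfrac12\sqrt{\log T}}\), which is super-polylogarithmic and eventually dominates the union-bound factor \(e^{5(\log T)^{3/2}}\); this is precisely what allows the argument to close for sufficiently large \(M\). The ``delicate part'' is therefore not the test exponents themselves but the uniform control of the likelihood ratio \(\bar p^{\Phi_\lambda(\theta)}/\bar p^\theta\) over each fine cell.
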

	 
	 \begin{proof}
	 	 
	 	 By the construction of \(\Lambda_0\), there is a \(\lambda'\in\Lambda_0\) with 
	 	\(\eps_{\lambda'}\le 2\eps_0\) (and clearly \(\eps_{\lambda'}\ge \eps_0\)). Using \cite[lemma 4.2]{meulen2006} and  \(\eps_0\le \eps_{\lambda'}\le 2\eps_0\), we see that for every such \(\lambda'\),
	 	
	 	\begin{align*}
	 		P^{\theta_0}\rh{\ah{\int \bar p^\theta(X^T)d\Pi_{\lambda'}(\theta)>e^{-4(C_\rho^2K^2+1)T\eps_0^2}}\cap E}
	 		\ge  P^{\theta_0}(E)-\expa{-\frac18C_\rho^2T\eps_0^2}.
	 	\end{align*}
	 	
	 	As \(\P^{\theta_0}(E)\) converges to one, and the other term to zero, by  \cref{lem:Tepszerosquaredconvergestoinfinity}, it follows that the quantity on the left of the inequality converges to one.

	 	Let \(B>4(C_\rho^2K^2+1)\) be a constant. Let us now concentrate our attention on the event 
	 	
	 	\begin{equation}\label{eq:eventthattheMLissmalloutsidelambdazero}
	 		E\cap\set{\sup_{\lambda\in \Lambda_0^c}\int \bar p^\theta(X^T)d\Pi_\lambda(\theta) > e^{-BT\eps_0^2}}.
	 	\end{equation}
	 	
	 	We have to show that the \(P^{\theta_0}\)-probability of    \cref{eq:eventthattheMLissmalloutsidelambdazero} converges to zero. We have\begin{align*}
	 		\sup_{\lambda\in \Lambda_0^c}\int \bar p^\theta(X^T)d\Pi_\lambda(\theta)
	 		\le \max_{h\in \set{1,\ldots,N}} \sup_{\lambda\in I_h}\int \bar p^\theta(X^T)d\Pi_\lambda(\theta)
	 	\end{align*}
	 	where \(N\in\NN, I_h\subseteq \Lambda, h\in\set{1,\ldots,N}\) so that \(
	 		\bigcup_{k=1}^N I_h\supseteq \Lambda_0^c,
	 	\) choose \(\lambda_1=(\alpha_1,s_1),\ldots,\lambda_N=(\alpha_N,s_N)\in\Lambda_0^c\) and sets \(I_1,\ldots, I_N\subseteq \Lambda\), so that \(\alpha_h=\inf\set{\alpha:(\alpha,s)\in I_h}\), and \\\( \text{diam}(\set{\alpha:(\alpha,s)\in I_h})\le T^{-1.5\sqrt{\log T}-1.5}/\log T\) and \( |s-s_h|\le T^{-1.5\sqrt{\log T}-\frac{10}6}\) for all \(s\) so that \((\alpha,s)\in I_h\). The first and the second condition are trivially satisfied for case 1 and case 2 in \cref{thm:posteriorcontractiontheorem}, respectively. Note that if we chose a discrete set \(\Lambda\),  we can choose singleton sets \(I_h=\set{\lambda_h}, \lambda_h\in\Lambda_0^c\).
	 	Whatever set \(\Lambda\)  we chose,  \(N\) is bounded by \(e^{5(\log T)^{3/2}} \), %
	 	for sufficiently large \(T\). It follows that
	 	\begin{align*}
	 		& P^{\theta_0}\left[\set{\sup_{\lambda\in\Lambda_0^c}\int \bar p^\theta(X^T)d\Pi_\lambda(\theta)>e^{-BT\eps_0^2}}\cap  E\right]\\
	 		\le & e^{5(\log T)^{3/2}}\max_{1\le k\le N}P^{\theta_0}\left[\set{\sup_{\lambda\in I_h}\int \bar p^\theta(X^T)d\Pi_\lambda(\theta)>e^{-BT\eps_0^2}}\cap  E \right].
	 	\end{align*}

	 	It is only left to show that\[\max_{1\le k\le N}P^{\theta_0}\left[\set{\sup_{\lambda\in I_h}\int \bar p^\theta(X^T)d\Pi_\lambda(\theta)>e^{-BT\eps_0^2}}\cap E \right]=\littleo\rh{e^{-5(\log T)^{3/2}}}\quad\text{as}\quad T\to\infty.\]

	 	Let \(h\in\set{1,\ldots,N}\). Then \(\lambda_h\in\Lambda_0^c\) and so \(\eps_{\lambda_h}\ge M\eps_0\). Let \(\varphi\) be the test function of \cref{lem:checkingtheconditions}, with \(\eps=\eps_{\lambda_h}\) and \(U=K\) and \(\Theta\) the corresponding sieve, where \(K\) is the constant in \cref{eq:definingpropertyepslambda}, which can be chosen independent of \(h\in\set{1,\ldots,N}\).

	 	For each  \(h\in\set{1,\ldots,N}\) and for all \(\lambda\in I_h\), we define a 
	 	measurable map (which we call a transformation) 
	 	\(\Phi_{\lambda}:\sp\set{\phi_k:1\le k\le \floor T} (=\supp \Pi_{\lambda_h}) 
	 	\to \sp\set{\phi_k:1\le k\le \floor T} (=\supp \Pi_\lambda )\) such that when 
	 	\(\theta\sim \Pi_{\lambda_h}\), then \(\Phi_{\lambda}(\theta)\sim 
	 	\Pi_{\lambda}\). Obviously, when we chose a discrete set \(\Lambda \) then 
	 	\(\Phi_\lambda \) is the identity map. 
	 	 For \(\theta\in \sp\set{\phi_k:1\le k\le \floor T}\), we have 
	 	 \(\theta=\sum_{k=1}^{\floor T} \theta_k\phi_k\), for some \(\theta_1,\ldots,\theta_{\floor T}\in\re\). Define for 
	 	 \(\lambda=(\alpha,s)\), \begin{align*}
	 		\Phi_{\lambda}(\theta)=\frac{s}{s_h}\sum_{k=1}^{\floor T} 
	 		k^{\alpha_h-\alpha}\theta_k\phi_k.
	 	\end{align*} 
	 	\(\Phi_\lambda \) clearly satisfies the desired properties.
	 	Using this for a non-empty family of positive random variables,
	 	 \(Y_\gamma, \gamma\in\Gamma, \sup_{\gamma\in\Gamma}\E Y_\gamma\le \E \sup_{\gamma\in\Gamma}Y_\gamma\)
	 	  and the transformation \(\Phi_{\lambda}\), we obtain 
	 		 	
	 	\begin{align*}
	 		&P^{\theta_0}\left[\set{\sup_{\lambda\in I_h}\int \bar p^\theta(X^T)d\Pi_\lambda(\theta)>e^{-BT\eps_0^2}}\cap E \right]\\
	 		\le & \E_{\theta_0}\varphi
	 		+\E_{\theta_0}\left[\I\set{\int \sup_{\lambda\in I_h}\bar p^{\Phi_{\lambda}(\theta)}(X^T)d\Pi_{\lambda_h}(\theta)>e^{-BT\eps_0^2}}(1-\varphi)\I_E\right].
	 	\end{align*} 
	 	The first term \(\E_{\theta_0}\varphi\le e^{-C_1K^2T\eps_{\lambda_h}^2}\le e^{-C_1K^2M^2T\eps_0^2}\le e^{-M^2T\eps_0^2}\), thereby using \(\eps_{\lambda_h}\ge M\eps_0\) and choosing sufficiently large \(K\), which can and will be done uniformly over \(h\in\set{1,\ldots,N}\), hereby noting that \(C_1\) does not depend on \(K\) and on the \(h\). With the help of the Markov inequality and Fubini's theorem, the second term is bounded by 
	 	\begin{align}
	 		e^{BT\eps_0^2}\int \E_{\theta_0}\left[\sup_{\lambda\in I_h}\bar p^{\Phi_{\lambda}(\theta)}(X^T)(1-\varphi(X^T))\I_E\right]d\Pi_{\lambda_h}(\theta)\nonumber\\
	 		=e^{BT\eps_0^2}\int \E_{\theta}\left[\sup_{\lambda\in I_h}(\bar p^{\Phi_{\lambda}(\theta)}(X^T)/\bar p^\theta(X^T))(1-\varphi(X^T))\I_E\right]d\Pi_{\lambda_h}(\theta) \label{eq:upperboundaftertakingmarkovandfubini}
	 	\end{align}
	 	With the help of the Cauchy-Schwartz inequality and using  \((1-\varphi(X^T))^2\le 1-\varphi (X^T)\) (because \(1-\varphi\) takes values in \([0,1]\)), we find that \cref{eq:upperboundaftertakingmarkovandfubini} is bounded by 
	 	\begin{equation}\label{eq:upperboundaftersplittingintestpartandsuppart}
\begin{split}	 	
	 	&e^{BT\eps_0^2}\sqrt{\int \E_{\theta}\left[\sup_{\lambda\in I_h}\big(\bar p^{\Phi_{\lambda}(\theta)}(X^T)/\bar p^{\theta}(X^T)\big)^2\I_E\right]d\Pi_{\lambda_h}(\theta)}
	 	\sqrt{\int \E_{\theta}\left[(1-\varphi(X^T))\I_E\right]d\Pi_{\lambda_h}(\theta)}.	 		\end{split} 	
	 		\end{equation}
	 		For the integral in the second square root of the last expression we have the following bound:%
	 		\begin{align*}
	 			&\int \E_{\theta}\left[(1-\varphi(X^T))\I_E\right]d\Pi_{\lambda_h}(\theta)
	 			\le 	\Pi_{\lambda_h}\big[\set{\theta\in\mathring L^2(\TT):\|\theta-\theta_0\| < K\eps_{\lambda_h}}\big]\\
	 			&+\int_{\theta\in\Theta(\lambda_h):\|\theta-\theta_0\|\ge K\eps_{\lambda_h}}\E_\theta[(1-\varphi(X^T))\I_E]d\Pi_{\lambda_h}(\theta)
	 			+\Pi_{\lambda_h}(\mathring L^2(\TT)\weg\Theta(\lambda_h))\\
	 			\le & e^{-T\eps_{\lambda_h}^2}+e^{-C_2KT\eps_{\lambda_h}^2}+e^{-KT\eps_{\lambda_h}^2}
	 			\le  3 e^{-M^2T\eps_0^2},
	 		\end{align*}
	 		for sufficiently large \(K\) (note that \(K\) does also not depend on \(C_2\)) and using \(\eps_{\lambda_h}\ge M\eps_0\).
	 		
	 		Let us now consider the other integral in \cref{eq:upperboundaftersplittingintestpartandsuppart}.  
	 		In the cases in which we choose a discrete set, the left square root of \cref{eq:upperboundaftersplittingintestpartandsuppart} is simply one.
	 		When we chose a continuous interval, we first bound the inner expectation uniformly over \(\theta\in \mathring L^2(\TT)\) with \(\|\theta\|_2\le D\). 
	  \begin{restatable}{theorem}{restatabletheoremupperboundforQinalphacase}\label{restatabletheorem:upperboundforQinalphacase}
	 A constant \(T_0>0\) exists, only depending on \(\delta\) in \cref{eq:defLambda}, so that for every \(D>0\) and \(\theta\in \mathring L^2(\TT)\) with \(\|\theta\|_2\le D\) and \(T\ge T_0\),  
	\begin{align}
	&\E_{\theta}\left[\sup_{\lambda\in I_h}\rh{\bar p^{\Phi_{\lambda}(\theta)}/\bar p^{\theta}(X^T)}^2\I_E \right]
	\le 12T + 8e^{24C_\rho^2D^2T^{-3\sqrt{\log T}}}\label{eq:upperboundforQoneminusthetestfunction2familyIII}
	 \end{align}
\end{restatable}
The proof of this theorem is in \cref{proofofrestatabletheorem:upperboundforQinalphacase}.
Next, we need a bound on the `tail' of the prior, which is given in 
\begin{lemma}\label{lem:tailmassofthepriorfamiliesIandII} For \(x/s\ge \frac1{\sqrt{2\alpha}}\),  we have \begin{align*}
		\Pi_\lambda (\|\theta\|_2>x)\le &	\expa{-\alpha x^2/(s^2)}.
		\end{align*}
\end{lemma}
\begin{proof}
	Let \(\Phi\) be the cumulative distribution function of a standard normally distributed random variable.  
It follows from \cite[corollary 5.1]{vaart2008} and elementary bounds on \(\Phi \) that when \(x\ge \frac 1 {\sqrt{2\alpha}}\),
\begin{align*}
\P\rh{\norm{\sum_{k=1}^{\floor T} k^{-\alpha-1/2}Z_k\phi_k}_2>x}
= \P\rh{\sum_{k=1}^{\floor T} k^{-2\alpha-1}Z_k^2>x^2}\\
\le 	\P\rh{\sum_{k=1}^\infty k^{-2\alpha-1}Z_k^2>x^2}
	\le  1-\Phi(\sqrt{2\alpha} x)\le  \expa{-\alpha x^2}. 
\end{align*} 
This implies the result after substituting \(x/s\) for \(x\).
\end{proof}

Note that for \( \Lambda_1 \), \(x/s\ge\frac1{\sqrt{2\alpha}}\) when \(x \ge \frac1{\sqrt{1+2\delta}}T^{\sqrt{\log T}}\). %
As \( \Pi_\lambda(\|\theta\|_2>0)=e^0\), we may write
	\begin{align*}
&\int \E_{\theta}\left[\sup_{\lambda\in I_h}\big(\bar p^{\Phi_{\lambda}(\theta)}(X^T)/\bar p^{\theta}(X^T)\big)^2\I_ E\right]d\Pi_{\lambda_h}(\theta)\\
\le &\sum_{m=0}^\infty\sup_{\|\theta\|\le \sqrt{\frac{m+1}{1+2\delta}}  T^{\sqrt{\log T}} } P^{\theta}\left[\sup_{\lambda\in I_h}\big(\bar p^{\Phi_{\lambda}(\theta)}(X^T)/\bar p^{\theta}(X^T)\big)^2\I_E\right]\Pi_{\lambda_h}\rh{\|\theta\|_2\ge \sqrt{\frac{m}{1+2\delta}} T^{\sqrt{\log T}}}\\
\le &\sum_{m=0}^\infty \rh{12T+8\expa{24C_\rho^2\frac{m+1}{1+2\delta}T^{-\sqrt{\log T}}}} e^{- m/2}
=   \frac{12T}{1-e^{-1/2}} + 8 \expa{\frac{24C_\rho^2}{1+2\delta } T^{-\sqrt{\log T}}} \\
&+ 8\sum_{m=1}^\infty \expa{- m T^{-\sqrt{\log T}}\rh{ \frac12T^{\sqrt{\log T}} - \frac{24C_\rho^2}{1+2\delta}\frac{m+1}{m} }}\\ 
\le&  31T +  22 + 29    
\le  32T,
\end{align*}
for sufficiently large \(T\). 
In summary, 
\begin{align}
&\P^{\theta_0}\rh{\set{\sup_{\lambda\in \Lambda_0^c}\int \bar p^\theta(X^T)d\Pi_\lambda(\theta) > e^{-BT\eps_0^2}}\cap  E}
\le   e^{5(\log T)^{3/2}}\rh{e^{-M^2T\eps_0^2}+\sqrt{96T}e^{(B-M^2/2)T\eps_0^2}}.\label{eq:upperboundconclusion}  
\end{align}
Using \(T\eps_0^2\gtrsim e^{\frac12\sqrt{\log T}}\) (\cref{lem:Tepszerosquaredconvergestoinfinity}),  which is eventually larger than \(5(\log T)^{3/2}\), it follows that for \(M>\sqrt{2B}\), \cref{eq:upperboundconclusion} converges to zero, which completes the proof of the theorem.
\end{proof}	
	\section{Posterior convergence on the favourable event}\label{sec:posteriorconvergencegivenhatlambdainLambdazero}
	\begin{theorem}
	Let \(F\) be the event and \(M\) be the constant of \cref{thm:hatlambdainafavourableset}. 		For some constant \(H>1\), 
			 \begin{align*}
	 	\E_{\theta_0}\left[\Pi_{\hat\lambda}(\|\theta-\theta_0\|_2\ge HM\eps_{0}\mid X^T)\I_F\right]
	 \end{align*}
	 converges to zero, as \(T\to\infty\). 
	\end{theorem}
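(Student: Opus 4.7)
My approach is to combine the denominator lower bound that $F$ guarantees for $\Pi_{\hat\lambda}$ with a tests-and-sieve argument carried out in close parallel to the proof of \cref{thm:hatlambdainafavourableset}. First, on $F$ we have $\hat\lambda\in\Lambda_0$ by \cref{thm:hatlambdainafavourableset}, and the defining inequality \cref{eq:alternativechoiceforhatlambda} together with the lower bound built into $F$ yields
\[
\int\bar p^\theta(X^T)d\Pi_{\hat\lambda}(\theta)\ge \frac12\sup_{\lambda\in\Lambda_0}\int\bar p^\theta(X^T)d\Pi_\lambda(\theta) \ge \frac12 e^{-AT\eps_0^2},
\]
so that for any test $\varphi$,
\[
\Pi_{\hat\lambda}(\|\theta-\theta_0\|_2\ge HM\eps_0\mid X^T)\I_F \le \varphi + 2e^{AT\eps_0^2}\sup_{\lambda\in\Lambda_0}\int_{\|\theta-\theta_0\|_2\ge HM\eps_0}\bar p^\theta(1-\varphi)d\Pi_\lambda.
\]

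Second, I would cover $\Lambda_0$ by the same pieces $I_1,\ldots,I_N$ (with $N\le e^{5(\log T)^{3/2}}$ in the continuous cases and $N=|\Lambda|$ in the discrete ones) and representatives $\lambda_h\in I_h$ as in the proof of \cref{thm:hatlambdainafavourableset}, and take $\varphi=\max_h\varphi_h$ with $\varphi_h$ the test from \cref{lem:checkingtheconditions} at level $\eps=HM\eps_0/2$ and sieve $\Theta(\lambda_h)$, separating $\theta_0$ from $\{\|\theta-\theta_0\|_2\ge HM\eps_0/2\}$. The test error is bounded by $\E_{\theta_0}\varphi\le N\,e^{-C_1K^2H^2M^2T\eps_0^2/4}$, which is $o(1)$. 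For the supremum term, pulling each $\Pi_\lambda$ back through $\Phi_\lambda$ and applying Fubini reduces the task, up to an additive $\P^{\theta_0}(E^c)\to 0$, to bounding
\[
\sum_{h=1}^N\int\E_{\theta_0}\Bigl[\sup_{\lambda\in I_h}\bar p^{\Phi_\lambda(\theta)}\I_{B_\theta}(1-\varphi_h)\I_E\Bigr]d\Pi_{\lambda_h}(\theta),
\]
where $B_\theta=\{\sup_{\lambda\in I_h}\|\Phi_\lambda(\theta)-\theta_0\|_2\ge HM\eps_0\}$. From here I proceed exactly as in \cref{thm:hatlambdainafavourableset}: change measure via $\bar p^{\Phi_\lambda(\theta)}=\bar p^\theta\cdot\bar p^{\Phi_\lambda(\theta)}/\bar p^\theta$, apply Cauchy--Schwarz, bound one factor by $O(\sqrt T)$ using \cref{restatabletheorem:upperboundforQinalphacase} together with the annulus sum based on \cref{lem:tailmassofthepriorfamiliesIandII}, and bound the other by the type II error $e^{-C_2K(HM\eps_0)^2T/4}$ for $\theta\in\Theta(\lambda_h)$, picking up the sieve complement mass $e^{-KT\eps_{\lambda_h}^2}$ for $\theta\notin\Theta(\lambda_h)$.

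Summing over the $N$ pieces and multiplying by the prefactor $2e^{AT\eps_0^2}$ then yields an overall bound of the form $e^{5(\log T)^{3/2}}\sqrt T\,e^{(A-cH^2M^2)T\eps_0^2}$, which tends to zero as soon as $H$ is chosen large enough that $cH^2M^2>A$, because $T\eps_0^2\gtrsim e^{\frac12\sqrt{\log T}}$ dominates $(\log T)^{3/2}$ by \cref{lem:Tepszerosquaredconvergestoinfinity}. The main obstacle will be the same one that arose in the proof of \cref{thm:hatlambdainafavourableset}: the interaction between the supremum over $\lambda\in I_h$ and the indicator on $\{\|\Phi_\lambda(\theta)-\theta_0\|_2\ge HM\eps_0\}$, since $\Phi_\lambda(\theta)$ rather than $\theta$ appears in the level set. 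This is resolved by choosing the diameter of $I_h$ so small that $\|\Phi_\lambda(\theta)-\theta\|_2$ is uniformly negligible on the sieve, so that on $B_\theta$ one has $\|\theta-\theta_0\|_2\ge \frac12 HM\eps_0$, which is precisely what the tests of \cref{lem:checkingtheconditions} at level $\eps=HM\eps_0/2$ are designed to distinguish.
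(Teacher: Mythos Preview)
Your proposal is correct and follows essentially the same route as the paper: bound the denominator on $F$ via \cref{thm:hatlambdainafavourableset}, cover $\Lambda_0$ by small pieces $I_h$, invoke the tests and sieves of \cref{lem:checkingtheconditions}, pull back through $\Phi_\lambda$, and control the resulting integrals via Cauchy--Schwarz together with \cref{restatabletheorem:upperboundforQinalphacase} and \cref{lem:tailmassofthepriorfamiliesIandII}. The only organisational differences are that the paper first passes to a probability statement (bounding $\P^{\theta_0}\{\int_{\|\theta-\theta_0\|\ge HM\eps_0}\bar p^\theta d\Pi_{\hat\lambda}>e^{-A'T\eps_0^2}\}$) before applying Markov and the tests at level $U=H$, $\eps=\eps_{\lambda_h'}$, whereas you work directly in expectation with a single aggregated test $\varphi=\max_h\varphi_h$ at level $HM\eps_0/2$; and you make explicit the passage from $\|\Phi_\lambda(\theta)-\theta_0\|\ge HM\eps_0$ to $\|\theta-\theta_0\|\ge HM\eps_0/2$ on the sieve, which the paper leaves implicit. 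One small notational slip: \cref{lem:checkingtheconditions} is stated with $\eps=\eps_\lambda$ and a free $U\ge K$, so ``the test at level $\eps=HM\eps_0/2$'' should be read as choosing $U=HM\eps_0/(2\eps_{\lambda_h})\in[H/2,HM/2]$, which is $\ge K$ once $H\ge 2K$; with this reading your exponents $C_2U^2T\eps_{\lambda_h}^2=(C_2/4)H^2M^2T\eps_0^2$ and $U^2T\eps_{\lambda_h}^2$ are exactly what you need.
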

	\begin{proof}
	Choose \(\lambda'\in\Lambda_0\) satisfying \(\eps_{\lambda'}\le 2\eps_0\). 
	It follows from \(\int \bar p^\theta(X^T)d\Pi_{\hat\lambda}(\theta)\ge\frac12  \int \bar p^\theta(X^T)d\Pi_{\lambda'}(\theta)\) and \cref{thm:hatlambdainafavourableset} that in the event \(F\) (with \(A\) defined in \cref{thm:hatlambdainafavourableset}),
	\begin{align*}
	&\Pi_{\hat\lambda}(\|\theta-\theta_0\|_2\ge HM\eps_{0}\mid X^T)
\le 	2	e^{AT\eps_0^2}\int_{\|\theta-\theta_0\|_2\ge HM\eps_{0}}\bar p^{\theta}(X^T)d\Pi_{\hat\lambda}(\theta).
	\end{align*}
	There are subsets \(I_h', k=1,\ldots,N'\) of \(\Lambda\), containing a point \(\lambda_h'\in\Lambda_0\), which is defined as in the proof of  \cref{thm:hatlambdainafavourableset} with \(\Lambda_0^c\) replaced by \(\Lambda_0\) and where \(N'\) satisfies the same upper bound as \(N\), such that 
		\(
	\Lambda_0\subseteq \bigcup_{k=1}^{N'}I_h'.
	\)
	Let \(\Phi_\lambda'\) be defined  as \(\Phi_\lambda\) in the proof of  \cref{thm:hatlambdainafavourableset}, with \(\lambda_h\) replaced by \(\lambda_h'\) and \(I_h\) replaced by \(I_h'\). 
	Using \(M\eps_0\ge\eps_{\lambda_h'}\), for every \(h\in\set{1,\ldots,N'}\), we can bound for a constant \(A'>A\), 
	\begin{align*}
		&P^{\theta_0}\left[\int_{\|\theta-\theta_0\|_2\ge HM\eps_0}\bar p^{\theta}(X^T)d\Pi_{\hat\lambda}(\theta)>e^{-A'T\eps_0^2},F\right]\\
		\le & N'\cdot\max_{h\in\set{1,\ldots,N'}}P^{\theta_0}\left[\int_{\|\theta-\theta_0\|\ge H\eps_{\lambda_h'}}\sup_{\lambda\in I_h}\bar p^{\Phi_{\lambda}'(\theta)}(X^T)d\Pi_{\lambda_h}(\theta)>e^{-A'T\eps_0^2},F\right].
	\end{align*}
	Let \(h\in\set{1,\ldots,N'}\). Let \(\varphi\) be the test functions of \cref{lem:checkingtheconditions} with \(\lambda=\lambda_h', \eps=\eps_{\lambda_h}\) and \(U=H\ge K\). Then for  sufficiently large \(T\), 		
	 \begin{align*}
	 	&P^{\theta_0}\left[\int_{\|\theta-\theta_0\|\ge H\eps_{s_h'}}\sup_{\lambda\in I_h'}\bar p^{\Phi_{\lambda}'(\theta)}(X^T)d\Pi_{\lambda_h'}(\theta)>e^{-A'T\eps_0^2},F\right]\\
	 	\le & \E_{\theta_0}\varphi(X^T)
	 	+e^{A'T\eps_0^2}\int_{\|\theta-\theta_0\|\ge H\eps_{\lambda_h'}}\E_{\theta_0}\left[\sup_{\lambda\in I_h'}\bar p^{\Phi_{\lambda}'(\theta)}(X^T)(1-\varphi(X^T))\I_F\right]d\Pi_{\lambda_h}(\theta)\\
	 	\le & e^{-H^2T\eps_0^2}+ e^{A'T\eps_0^2}\sqrt{19T\rh{e^{-C_2H^2\eps_0^2}+  e^{-H^2T\eps_0^2}}},
	 \end{align*}
in a similar way as in the proof of \cref{thm:hatlambdainafavourableset} and using that \(\eps_\lambda\ge \eps_0\).  Hence, for \(H>\sqrt{\frac{2A'}{C_2\wedge1}}\), \(\Pi_{\hat\lambda}(\|\theta-\theta_0\|_2\ge HM\eps_{0}\mid X^T)\to 0\), as \(T\to\infty\), in a similar way as in the proof of \cref{thm:hatlambdainafavourableset}.%
\end{proof}	 
\section{The proof of \cref{restatabletheorem:upperboundforQinalphacase}}\label{proofofrestatabletheorem:upperboundforQinalphacase}
\restatabletheoremupperboundforQinalphacase*
\begin{proof}
Note that \(W\) is a BM under \(P^{\theta_0}\), and so 
\begin{align*}
W^\theta_t:=W_t-\int_0^t(\theta(X_u)-\theta_0(X_u))du
\end{align*}
is a BM under \(P^{\theta}\).
We may write 
\begin{align*}
	&\bar p^{\Phi_{\lambda}(\theta)}
= \bar p^\theta (X^T)
 \expa{\int_0^T(\Phi_{\lambda}(\theta)(X_u)-\theta(X_u))dW_u^\theta-\frac12\int_0^T\big(\Phi_{\lambda}(\theta)(X_u)- \theta(X_u)\big)^2du}.
\end{align*}
Note that for all \(G>0 \)
\begin{align*}
&\E_{\theta}\left[\sup_{\lambda\in I_h}\rh{\bar p^{\Phi_{\lambda}(\theta)}/\bar p^{\theta}(X^T)}^2\I_E \right]
\le e^{2G}+\sum_{g=G}^\infty e^{2g+2}\P^{\theta}\left(\ah{\frac{\sup_{\lambda\in I_h}\bar p^{\Phi_{\lambda}(\theta)}}{\bar p^{\theta}(X^T)}>e^g}\cap  E\right).
\end{align*}
Let \(D>0\) and \(\theta\in\mathring  L^2(\TT)\) with \(\|\theta\|_2\le D\). Note that in the event \(E\), 
\begin{align*}
&\vh{ \int_0^{\sdot}(\Phi_{\lambda}(\theta)(X_t)-\theta(X_t))dW_t^\theta}_T
=  \int_0^{T}(\Phi_{\lambda}(\theta)(X_t)-\theta(X_t))^2dt
\le  C_\rho^2 T \|\Phi_{\lambda}(\theta)-\theta\|_2^2.
\end{align*}
We have \begin{align*}
&\|\Phi_{(\alpha,s)}(\theta)-\theta\|_2^2
=\sum_{k=1}^{\floor T} \rh{\frac{s}{s_h}k^{\alpha_h-\alpha}-1}^2\inpr{\theta,\phi_k}^2.
\end{align*}
We next bound \(\abs{\frac {s}{s_h}k^{\alpha_h-\alpha}-1}\).  For this, we use the following
\begin{lemma}\label{lem:boundfor1minusmuetothepowerminusx}
	For \(\mu>0\) and \(x\ge0\) we have \(
	1-\mu e^{-x}\le \mu x + 1-\mu.
	\)
	Moreover, for all \(\mu>0\) and \(x\ge 0\), we have \(
	|1-\mu e^{-x}|\le \mu x + |1-\mu|.
	\)
\end{lemma}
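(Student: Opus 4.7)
The plan is to reduce both inequalities to the classical estimate $1 - e^{-x} \le x$ for $x \ge 0$, which follows immediately from convexity of $e^{-x}$ (or from observing that the derivative of $g(x) = x - 1 + e^{-x}$ is $1 - e^{-x} \ge 0$ on $[0,\infty)$ and $g(0) = 0$).

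For the first statement, I would simply rearrange: the inequality $1 - \mu e^{-x} \le \mu x + 1 - \mu$ is equivalent to $\mu(1 - e^{-x}) \le \mu x$, which, since $\mu > 0$, is equivalent to $1 - e^{-x} \le x$. So this follows directly from the classical bound.

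For the second statement, I would split into cases according to the sign of $1 - \mu e^{-x}$. If $1 - \mu e^{-x} \ge 0$, then $|1 - \mu e^{-x}| = 1 - \mu e^{-x} \le \mu x + 1 - \mu$ by the first part; since $1 - \mu \le |1 - \mu|$, the desired bound follows. If $1 - \mu e^{-x} < 0$, then $\mu e^{-x} > 1$, and since $e^{-x} \le 1$, this forces $\mu > 1$, so $|1 - \mu| = \mu - 1$. Using $e^{-x} \le 1$ again, $|1 - \mu e^{-x}| = \mu e^{-x} - 1 \le \mu - 1 = |1 - \mu| \le \mu x + |1 - \mu|$.

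There is no genuine obstacle here; the only thing to watch is getting the case split right and correctly identifying, in the second case, that $1 - \mu e^{-x} < 0$ forces $\mu > 1$ so that $|1 - \mu| = \mu - 1$. Once that is noted, both inequalities drop out of elementary manipulations together with $1 - e^{-x} \le x$.
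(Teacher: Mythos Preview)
Your proof is correct and essentially matches the paper's. The only cosmetic difference is in the first inequality: you cancel $\mu$ and reduce to the classical bound $1-e^{-x}\le x$, whereas the paper compares the derivatives of $f(x)=1-\mu e^{-x}$ and $g(x)=\mu x+1-\mu$ directly; the second-inequality case split (on the sign of $1-\mu e^{-x}$, equivalently $\mu\le e^{x}$ versus $\mu>e^{x}$) is identical to the paper's.
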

\begin{proof}
	Let us consider the first inequality. 
	Denote \(f(x)=1-\mu e^{-x}\) and \(g(x)=\mu x + 1-\mu\). Note that \(f(0)=g(0)\) and \(g'(x)=\mu\ge f'(x)=\mu e^{-x}\), for all \(x\ge 0\). It follows that \(g(x)\ge f(x)\) for all \(x\ge0\). This proves the first inequality. 
	For the second inequality, we note that when \(\mu>e^x\), \(|1-\mu e^{-x}|=\mu e^{-x}-1\le \mu-1\le \mu x + |1-\mu|\). When \(\mu\le e^x\), we have \(|1-\mu e^{-x}|=1-\mu e^{-x}\le \mu x + 1-\mu \le  \mu x + |1-\mu|.\) The proof is now complete.
\end{proof}
Using lemma \cref{lem:boundfor1minusmuetothepowerminusx}, and the fact that \(\alpha_h\le \alpha\), we see that \begin{equation}\label{eq:pointwhereIneedtotruncateatT}
	|1-\frac{s}{s_h}k^{\alpha_h-\alpha}|
\le  \frac{s}{s_h}|\alpha-\alpha_h|\log k+\frac{|s-s_h|}{s_h}
\le   \frac{s}{s_h}|\alpha-\alpha_h|\log T+\frac{|s-s_h|}{s_h}.
\end{equation} 
Using the lower bound for \(s\in\Lambda\) and the upper bound for \(|s-s_h|\) on \(I_h\), we derive\begin{align*}
\frac{|s-s_h|}{s_h}
\le  T^{\frac1{4+4\alpha}}T^{-1.5\sqrt{\log T}-\frac{10}{6}}\le T^{\frac1{6+4\delta}-\frac16} T^{-1.5\sqrt{\log T}-1.5}\le 0.5T^{-1.5\sqrt{\log T}-1.5}, 
\end{align*}
for sufficiently large \(T\). 
We also have \begin{align*}
\frac{s}{s_h}\le &1+\frac{|s-s_h|}{s_h}\le 1.5, 
\end{align*}
for sufficiently large \(T\). 
It follows that for all \(\lambda=(\alpha,s)\) and all \(k\) that we consider, using the upper bound on \(|\alpha_h-\alpha|\) on \(I_h\), \[
|1-\frac{s}{s_h}k^{\alpha_h-\alpha}|\le  2T^{-1.5\sqrt{\log T}-1.5},
\]
for sufficiently large \(T\).

By the Cauchy inequality and the assumption on \(\theta\), we have \(|\inpr{\theta, \phi_k}|\le \|\theta\|_2\le D\). 

Using the Bernstein inequality (\cite[pp. 153-154]{revuzyor1999}), for \(g>0\), we obtain 
\begin{align*}
&\P^{\theta}\left[\frac{\sup_{\lambda \in I_h}\bar p^{\Phi_{\lambda}(\theta)}}{\bar p^{\theta}(X^T)}>e^g, E\right]
\le  \P\vh{\sup_{\lambda\in I_h}\int_0^T \big(\Phi_{\lambda}(\theta)(X_t)- \theta(X_t)\big)dW_t^\theta > g, E}\\
\le & \P\vh{T\sup_{(\alpha,s)\in I_h}
\max_{k\in\set{1,\ldots,\floor{T}}} \abs{\rh{\frac{s}{s_h}k^{\alpha_h-\alpha}-1}\inpr{\theta,\phi_k}\int_0^T\phi_k(X_t)dW_t^\theta} > g
, E
}\\
\le & T \max_{k\in\set{1,\ldots,\floor{T}}} \P\vh{\abs{\int_0^T \phi_k(X_t)dW_t^\theta } > \frac{g}{2D}T^{1.5\sqrt{\log T}+0.5} , \vh{\int_0^{\sdot } \phi_k(X_t)dW_t^\theta }_T\le C_\rho^2 T  }\\
\le & T\expa{-\frac{g^2}{4C_\rho^2D^2}T^{3\sqrt{\log T}} }.
\end{align*}

Note that for \(g>0\), \(
Ag^2 - 2g \ge g\quad 
\desda\quad
g \ge 3A^{-1}\).
It follows that for integers \(G\ge 12C_\rho^2D^2T^{-3\sqrt{\log T}}\), \begin{align*}
	&\E_{\theta}\left[\sup_{s\in I_h}\rh{\bar p^{s\theta/s_h}(X^T)/\bar p^{\theta}(X^T)}^2, E \right]
\le e^{2G}+e^2T\sum_{g=G}^\infty e^{-g}
= e^{2G}+e^2T\frac{e^{-G}}{1-e^{-1}}
\le  e^{2G}+ 12T,
\end{align*}
as \(e^2\frac{e^{-G}}{1-e^{-1}}\le \frac{e^2}{1-e^{-1}}\le 12\). 
Taking  \(G=\ceil{ 12C_\rho^2D^2T^{-3\sqrt{\log T}}}\), and using \(\ceil x\le x+1\),  for every real number \(x\) and \(e^2\le 8\), the last display is further bounded by \(
12T + 8e^{24C_\rho^2D^2T^{-3\sqrt{\log T}}}.
\)
\end{proof}
\section{Upper and lower bounds for \(\eps_\lambda\)}\label{sec:upperandlowerboundsforepslampda} 
Let \(\Pi_\lambda^\infty\) be the distribution on \(\mathring L^2(\TT)\) defined by the law of\[
s\sum_{ k=1}^\infty k^{-\alpha-1/2}Z_k\phi_k. 
\]

	It follows from \cite[lemmas 4.2, 4.3 and 4.4]{waaijzanten2016} and \cite[lemma 5.3]{vaart2008} that for sufficiently small \(\eps/s>0\), \(\beta\le\alpha+1/2\) and 
	\(\theta_0\in\dot H^\beta(\TT)\), 
	\begin{align*}
	{\expa{-C_1\rh{C_2 \alpha (s/\eps)^{1/\alpha}+\frac1{s^2}\eps^{\frac{2\beta-2\alpha-1}{\beta}}}}  \le  	\Pi_\lambda^\infty(\|\theta-\theta_0\|_2<\eps) \le  \expa{-C_3 \alpha  (s/\eps)^{1/\alpha}}}.
	\end{align*} 
	As in the proof of \cref{lem:tailmassofthepriorfamiliesIandII}, we have \begin{equation}
	\begin{split}
	\P\rh{\norm{\sum_{k=1}^{\floor T} k^{-1/2-\alpha}Z_k\phi_k}_2<\eps }\ge \P\rh{\norm{\sum_{k=1}^{\infty } k^{-1/2-\alpha}Z_k\phi_k}_2<\eps }. 
	\end{split}
	\end{equation}
	For an inequality in the reverse order, we need the following modification of Markov's inequality
	\begin{lemma}\label{lem:modmarkov}
	 Let \(r>0\), \(X\) be a non-negative random variable, and \(A\) be an event, such that \(A\) and \(X\) are independent. Then \(
	 \P(A\cap \set{X\ge r}) \le \frac1r \P(A)\E X. 
	 \)
	\end{lemma}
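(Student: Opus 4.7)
The plan is to combine independence with the classical Markov inequality in a single line. First, I would use the hypothesis that $A$ and $X$ are independent to factor the probability of the intersection: since $\{X \ge r\}$ is a measurable event determined by $X$ and therefore independent of $A$, we have
\[
\P(A \cap \{X \ge r\}) = \P(A)\,\P(X \ge r).
\]
Next, I would apply the standard Markov inequality to the non-negative random variable $X$, which yields $\P(X \ge r) \le \E X / r$. Substituting this bound into the previous display gives exactly
\[
\P(A \cap \{X \ge r\}) \le \frac{1}{r}\,\P(A)\,\E X,
\]
as desired.

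There is essentially no obstacle here: the lemma is a one-line consequence of independence and Markov, and the only point worth noting is that $\{X \ge r\}$ inherits independence from $X$ because it lies in $\sigma(X)$. I would phrase the proof in two sentences and move on.
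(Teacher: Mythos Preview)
Your proof is correct and essentially matches the paper's: the paper writes $\P(A\cap\{X\ge r\})=\E[\I_A\I_{\{X\ge r\}}]\le \E[\I_A X/r]=\tfrac1r\P(A)\E X$, bounding the indicator by $X/r$ and then factoring via independence, whereas you factor first and then apply Markov---the same idea in a slightly different order.
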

\begin{proof}
	The lemma follows from \(
	\P(A\cap\set{X\ge r})= \E \vh{\I_A \I_{\set{X\ge r}} }\le \E\vh{\I_A \frac Xr}= \frac1r \P(A)\E X, 
	\) by independence.
\end{proof}
		Note that 
	\begin{equation}
	\E\vh{\norm{\sum_{k=\floor T + 1}^{\infty} k^{-1/2-\alpha}Z_k\phi_k}_2^2 }=  \sum_{k=\floor T + 1}^{\infty} k^{-1-2\alpha}
	\le \frac1{2\alpha} T^{-2\alpha}. 
	\end{equation}
	Hence for \(\eps>\frac1{\sqrt{\alpha}}T^{-\alpha}\), an application of \cref{lem:modmarkov} gives, \begin{align*}
	&\P\rh{\norm{\sum_{k=1}^{\floor T} k^{-1/2-\alpha}Z_k\phi_k}_2<\eps }
	=  \P\rh{\norm{\sum_{k=1}^{\floor T} k^{-1/2-\alpha}Z_k\phi_k}_2<\eps, \norm{\sum_{k=\floor T +1}^\infty k^{-1/2-\alpha}Z_k\phi_k}_2<\eps }\\
	&+\P\rh{\norm{\sum_{k=1}^{\floor T} k^{-1/2-\alpha}Z_k\phi_k}_2<\eps, \norm{\sum_{k=\floor T +1}^\infty k^{-1/2-\alpha}Z_k\phi_k}_2\ge\eps }\\
	\le & \P\rh{\norm{\sum_{k=1}^\infty k^{-1/2-\alpha}Z_k\phi_k}_2<2\eps }
	+\eps^{-2}\frac1{2\alpha}T^{-2\alpha}\P\rh{\norm{\sum_{k=1}^{\floor T} k^{-1/2-\alpha}Z_k\phi_k}_2<\eps }\\
	\le &\P\rh{\norm{\sum_{k=1}^\infty k^{-1/2-\alpha}Z_k\phi_k}_2<2\eps }
	+\frac12\P\rh{\norm{\sum_{k=1}^{\floor T} k^{-1/2-\alpha}Z_k\phi_k}_2<\eps }
	\intertext{Rearranging the terms gives}
	&\P\rh{\norm{\sum_{k=1}^{\floor T} k^{-1/2-\alpha}Z_k\phi_k}_2<\eps }
	\le   2\P\rh{\norm{\sum_{k=1}^\infty k^{-1/2-\alpha}Z_k\phi_k}_2<2\eps }.
	\end{align*}	
	Hence, for sufficiently small \(\eps/s>\frac1{\sqrt\alpha}T^{-\alpha} \), \(\beta\le\alpha+1/2\), and \(\theta_0\in\dot H^\beta(\TT)\), 
	\begin{equation}\label{eq:onderenbovengrenzenvankleinebalkansen}
		{\expa{-C_1\rh{C_2\alpha (s/\eps)^{1/\alpha}+\frac1{s^2}\eps^{\frac{2\beta-2\alpha-1}{\beta}}}} \le  \Pi_\lambda(\|\theta-\theta_0\|_2<\eps)  \le 2\expa{-C_3\alpha (s/(2\eps))^{1/\alpha}}}.
	\end{equation} 
	We have 
	\begin{align*}
		2\expa{-C_3\alpha (s/(2K\eps_\lambda))^{1/\alpha}}\ge \Pi_\lambda (\|\theta-\theta_0\|_2<K\eps_\lambda)=e^{-T\eps_\lambda^2}.
	\end{align*}
	It follows that for sufficiently small \(\eps_\lambda/s\) 
	\begin{equation}\label{eq:lowerboundonepslambda}
	\eps_\lambda\ge  2^{-\frac{1+\alpha}{1+2\alpha}}C_3^{\frac{\alpha}{1+2\alpha}}K^{-\frac1{1+2\alpha}}\alpha^{\frac\alpha{1+2\alpha}}T^{-\frac\alpha{1+2\alpha}}s^{\frac1{1+2\alpha}}.
	\end{equation}
	Using this lower bound we derive the lower bound \begin{equation}
	\inf_{\lambda\in \Lambda_1} T\eps_\lambda^2
	\ge  \tilde C_3 K^{-1}e^{\frac12\sqrt{\log T}},\label{lem:Tepszerosquaredconvergestoinfinity} 
	\end{equation}
	where \(\tilde C_3 \) is a constant.
Using the other bound of \cref{eq:onderenbovengrenzenvankleinebalkansen},  we obtain	\begin{align*}
		&\expa{-C_1\rh{C_2\alpha (s/(K\eps_\lambda))^{1/\alpha}+\frac1{s^2}(K\eps_\lambda)^{\frac{2\beta-2\alpha-1}{\beta}}}}
		\le  \Pi(\|\theta-\theta_0\|_2<K\eps_\lambda)
		=  e^{-T\eps_\lambda^2}.
	\end{align*}
	
	Hence, either \(C_1C_2\alpha (s/(K\eps_\lambda))^{1/\alpha}\ge T\eps_\lambda^2/2\) or \(C_1\frac1{s^2}(K\eps_\lambda)^{\frac{2\beta-2\alpha-1}{\beta}}\ge T\eps_\lambda^2/2\). It follows that  
\begin{align*}
 \eps_\lambda \le 2^{\frac\alpha{1+2\alpha}}C_1^{\frac\alpha{1+2\alpha}}C_2^{\frac\alpha{1+2\alpha}}K^{-\frac1{1+2\alpha}} \alpha ^{\frac\alpha{1+2\alpha}} s^{\frac1{1+2\alpha}}T^{-\frac\alpha{1+2\alpha}}\bigvee 2^{\frac{\beta}{1+2\alpha}}C_1^{\frac{\beta}{1+2\alpha}}K^{\frac{2\beta-2\alpha-1}{1+2\alpha}}s^{-\frac{2\beta}{1+2\alpha}}T^{-\frac\beta{1+2\alpha}}.
\end{align*}	
		For each of the  three cases of \cref{thm:posteriorcontractiontheorem}, we derive an upper bound for \(\eps_0\) separately.
	
	\begin{itemize}
		\item In case 1, \( \alpha \) is fixed, so \begin{align*}
		\eps_\lambda \lesssim s^{\frac1{1+2\alpha}}T^{-\frac{\alpha}{1+2\alpha}} \bigvee s^{-\frac{2\beta}{1+2\alpha}} T^{-\frac\beta{1+2\alpha}}.
		\end{align*}
		We see that the best possible upper bound for \(\eps_0\) (up to a constant) is attained when  
		\[
		s^{\frac1{1+2\alpha}} T^{-\frac\alpha{1+2\alpha}} \asymp s^{-\frac{2\beta}{1+2\alpha}}T^{-\frac\beta{1+2\alpha}},\]
		that is, when \(s\asymp T^{\frac{\alpha-\beta}{1+2\beta}}\).  Note that  for \( \beta\in (0,\alpha+1/2]\) a quantity proportional to \( T^{\frac{\alpha-\beta}{1+2\beta}}\) is in \(\Lambda\) (also in the discrete case), and hence,  \(\eps_0\lesssim T^{-\frac\beta{1+2\beta}}\). %
		\item In case 2, \(s\) is fixed, so \[
		\eps_\lambda \lesssim  \alpha^{\frac\alpha{1+2\alpha}} T^{-\frac{\alpha}{1+2\alpha}}\bigvee T^{-\frac{\beta}{1+2\alpha}}.
		\]
		We see that the best possible upper bound for \(\eps_0\) (up to a constant) is attained when \(\alpha=\beta+\mathcal O(1/\log T)\). For sufficiently large \(T\), \(\Lambda\) contains such an element, and in the discrete case, when \(\beta\ge\alpha+\delta\), in which case \(\eps_0\lesssim T^{-\frac\beta{1+2\beta}}\). 
		\item In case 3, we have \[
		\eps_\lambda\lesssim \alpha^{\frac\alpha{1+2\alpha}} s^{-\frac{1}{1+2\alpha}}T^{-\frac{\alpha}{1+2\alpha}} \bigvee s^{-\frac{2\beta}{1+2\alpha}}T^{-\frac\beta{1+2\alpha}}. 
		\] We see that the best possible upper bound for \(\eps_0\) (up to a constant) is attained when  \(\alpha\ge (\beta-1/2)\vee (1/2+\delta)\) and 
		\[
		T^{-\frac\alpha{1+2\alpha}}s^{\frac1{1+2\alpha}}\asymp s^{-\frac{2\beta}{1+2\alpha}}T^{-\frac\beta{1+2\alpha}},\]
		that is, when \(s\asymp T^{\frac{\alpha-\beta}{1+2\beta}}\). Such a pair \((\alpha,s)\) is in \(\Lambda\) (for sufficiently large \(T\)), and in the discrete case, and we have \(\eps_0\lesssim T^{-\frac\beta{1+2\beta}}\). 
	\end{itemize}
	
\section{The existence of test functions}\label{sec:tests}

\begin{lemma}\label{lem:checkingtheconditions}
	There are positive constants \(C_1,C_2\) and \(K\), only depending on \(c_\rho\) and \(C_\rho\), such that for every \(\lambda\in\Lambda\) and \(U\ge K\), there are measurable sets (sieves)  \(\Theta_\lambda\subseteq \mathring L^2(\TT)\) satisfying
	\begin{align}
		\Pi_\lambda(\mathring L^2(\TT)\weg\Theta_\lambda)\le e^{-U^2T\eps_\lambda^2},\label{eq:remainingmassinexistenceoftestlemma}
	\end{align}
	and  measurable maps \(\varphi_\lambda:C[0,T]\to\set{0,1}\), which satisfy
	\begin{align}
		\E_{\theta_0}\varphi_\lambda(X^T) \le & e^{-C_1U^2T\eps_\lambda^2},\label{eq:proofofexistenceoftestserroroffirstorder}
		\intertext{and for all \(\theta\in\Theta_\lambda,\|\theta-\theta_0\|_2\ge U\eps_\lambda\),}
		\E_\theta[(1-\varphi_\lambda(X^T))\I_E]\le & e^{-C_2U^2T\eps_\lambda^2}.\label{eq:proofofexistenceoftestserrorofsecondorder}
	\end{align}
\end{lemma}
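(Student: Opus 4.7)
The plan is to follow the classical Gaussian-process testing scheme (as in \cite{waaijzanten2016}) but to carry it out on the event $E$, on which the random Hellinger distance $h$ is equivalent to $\sqrt T\|\cdot\|_2$ with constants $c_\rho,C_\rho$. First I would build the sieve as the Minkowski sum
\[
\Theta_\lambda = M_\lambda\, \mathcal H_\lambda^1 + \tfrac14 U\eps_\lambda\, \mathcal B,
\]
where $\mathcal H_\lambda^1$ is the unit ball of the Cameron--Martin space of $\Pi_\lambda$, namely the finite-dimensional ellipsoid in $\sp\set{\phi_k:1\le k\le \floor T}$ with semi-axes $sk^{-\alpha-1/2}$, and $\mathcal B$ is the unit ball of $\mathring L^2(\TT)$. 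Borell's inequality together with Anderson's inequality applied to \cref{eq:definingpropertyepslambda} yields $\Pi_\lambda(\|\theta\|_2<\tfrac14 U\eps_\lambda)\ge e^{-T\eps_\lambda^2}$ as soon as $U\ge 4K$, and then the choice $M_\lambda=U\sqrt{2T}\,\eps_\lambda$ delivers the sieve-complement bound \cref{eq:remainingmassinexistenceoftestlemma}. A volume-ratio estimate for the ellipsoid-plus-ball body gives $\log N(\tfrac12 U\eps_\lambda,\Theta_\lambda,\|\cdot\|_2)\lesssim U^2 T\eps_\lambda^2$ with a $\lambda$-independent constant.

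For the tests I would pick a $\tfrac12 U\eps_\lambda$-net $\theta_1,\ldots,\theta_N$ of $\Theta_\lambda\cap\set{\theta:\|\theta-\theta_0\|_2\ge\tfrac12 U\eps_\lambda}$ and, for each centre, use a likelihood-ratio test in the direction $\theta_j-\theta_0$, for instance $\varphi_j=\I\set{\int_0^T(\theta_j-\theta_0)(X_t)\,dW_t\ge\tfrac12 h(\theta_j,\theta_0)^2}$. Under $P^{\theta_0}$ the stochastic integral is a continuous martingale with bracket $h(\theta_j,\theta_0)^2$, and on $E$ this bracket lies between $c_\rho^2 T\|\theta_j-\theta_0\|_2^2$ and $C_\rho^2 T\|\theta_j-\theta_0\|_2^2$, so a Bernstein inequality for continuous martingales (\cite[pp.~153--154]{revuzyor1999}) yields $\E_{\theta_0}\varphi_j\le\exp(-c\,c_\rho^2 U^2 T\eps_\lambda^2)$. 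Under $P^\theta$ for $\|\theta-\theta_j\|_2\le\tfrac12 U\eps_\lambda$, writing $dW_t=dW_t^\theta+(\theta-\theta_0)(X_t)\,dt$ shifts the martingale by a drift that, by Cauchy--Schwarz and the triangle inequality on $E$, is controlled by a constant multiple of $h(\theta_j,\theta_0)^2$; the same Bernstein bound then gives $\E_\theta[(1-\varphi_j)\I_E]\le\exp(-c\,c_\rho^2 U^2 T\eps_\lambda^2)$. Setting $\varphi_\lambda=\max_j\varphi_j$, the type-I error is inflated by the cover factor $N=e^{O(U^2 T\eps_\lambda^2)}$, which is absorbed by enlarging $K$ (and hence $U\ge K$) so that $c\,c_\rho^2/2$ dominates the entropy constant, giving \cref{eq:proofofexistenceoftestserroroffirstorder}; \cref{eq:proofofexistenceoftestserrorofsecondorder} then follows by transporting the centre-wise type-II bound to any $\theta\in\Theta_\lambda$ with $\|\theta-\theta_0\|_2\ge U\eps_\lambda$ via its nearest $\theta_j$.

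The hard part will be ensuring uniformity of every implicit constant across $\lambda\in\Lambda_1$. The small-ball prefactor, the Borell threshold, and the entropy constant must depend only on $c_\rho, C_\rho$ and $\delta$, not on the particular $(\alpha,s)$, even though $\alpha$ may grow like $\sqrt{\log T}$ and $s$ polynomially in $T$. The Minkowski-sum construction cleanly separates the scale $M_\lambda$ from the ellipsoid geometry, and combined with the lower bound \cref{eq:lowerboundonepslambda} for $\eps_\lambda$ this should keep the entropy at $O(U^2 T\eps_\lambda^2)$ uniformly in $\lambda$; carrying out this bookkeeping without losing factors in $\alpha$ (particularly in the shifted-drift term of the type-II argument, where the constants $c_\rho, C_\rho$ interact with the net radius) is the principal technical burden.
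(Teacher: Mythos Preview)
Your plan is essentially the paper's proof. The paper builds the same Minkowski-sum sieve $\Theta_\lambda=R\,\mathring H_1^{\alpha+1/2}(\TT)+\tfrac{c_\rho\eps_\lambda}{16C_\rho}\mathring L^2_1(\TT)$ (following \cite[\S6.2]{waaijzanten2016}), uses Borell for the complement, bounds the $L^2$-entropy by $(R/\eps_\lambda)^{2/(1+2\alpha)}$, and plugs in \cref{eq:lowerboundonepslambda}. The only structural difference is that the paper does not build the tests by hand: it imports them from \cite[lemma~4.1]{meulen2006}, which delivers \cref{eq:proofofexistenceoftestserroroffirstorder,eq:proofofexistenceoftestserrorofsecondorder} directly from the local entropy condition $\log N\bigl(\tfrac{c_\rho\eps_\lambda}{8C_\rho},\{\theta\in\Theta_\lambda:\|\theta-\theta_0\|_2\le\eps_\lambda\},\|\cdot\|_2\bigr)\le c^2U^2T\eps_\lambda^2$. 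Two minor deviations in your version are harmless or even cleaner: (i)~using Anderson together with \cref{eq:definingpropertyepslambda} lets you take $M_\lambda=U\sqrt{2T}\,\eps_\lambda$ without the paper's extra concentration-function term $s\sqrt{C(s/\eps_\lambda)^{1/\alpha}}$ in $R$; (ii)~the paper's covering radius carries the factor $c_\rho/C_\rho$ rather than your $\tfrac12$, and that factor is exactly what you need in the type-II drift-shift computation so that $h(\theta,\theta_j)$ is small relative to $h(\theta_j,\theta_0)$ on $E$ --- a point you already flag in your last paragraph.

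There is, however, one genuine gap in your test sketch. Your Bernstein step for the type-I error uses the bracket bound $[M]_T\le C_\rho^2T\|\theta_j-\theta_0\|_2^2$, which holds only on $E$; hence your argument yields $\E_{\theta_0}[\varphi_j\I_E]\le e^{-cU^2T\eps_\lambda^2}$, not the unconditional $\E_{\theta_0}\varphi_j$ demanded by \cref{eq:proofofexistenceoftestserroroffirstorder}. The paper sidesteps this by invoking \cite[lemma~4.1]{meulen2006}, whose likelihood-ratio tests give an unconditional type-I bound. (In practice every application of the lemma in \cref{sec:hatlambdainlambdazerowithprobabilityconvergingtoone,sec:posteriorconvergencegivenhatlambdainLambdazero} already intersects with $E$ or $F\subseteq E$, so a conditional version would suffice for the main theorem; but the lemma as \emph{stated} asks for more, and your Bernstein route does not reach it.)

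Finally, your concern about uniformity in $\lambda$ is precisely the content of \cref{eq:pointwherdeltaisneeded}: after inserting \cref{eq:lowerboundonepslambda}, the entropy is at most $\bar C\,K^{(2-4\alpha)/(1+2\alpha)}U^2T\eps_\lambda^2$, and the standing assumption $\alpha\ge\tfrac12+\delta$ forces the exponent $(2-4\alpha)/(1+2\alpha)\le-4\delta/(2+2\delta)<0$, so choosing $K$ large makes the constant $\le c$ uniformly over $\Lambda_1$. This is the one place in the whole paper where the lower bound on $\alpha$ is actually used.
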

\begin{proof} In this proof, let \(N(\eps,S,d)\) denote the minimal number of balls of size \(\eps\) in a \(d\)-metric needed to cover a set \(S\). The log of this number is referred to as the entropy. It follows from \cite[lemma 4.1]{meulen2006} and the equivalence of the \(h\)- and the \(L^2\)-metric on \(E\) (\cref{eq:definitionofE}) that there are constants \(C_1\), \(C_2\), and \(c>0\), only depending on \(c_\rho\) and \(C_\rho\), such that when \(\Theta_\lambda\) is a measurable set satisfying the entropy condition \begin{equation}\label{eq:empiricalbayessufficientconditionforexistingoftestsintermsofentropy}
		\log N\rh{\frac{c_\rho\eps_\lambda}{8C_\rho},\set{\theta\in \Theta_\lambda:\|\theta-\theta_0\|_2\le\eps_\lambda},\|\sdot\|_2}\le c^2U^2T\eps_\lambda^2,
	\end{equation}
	for \(U\ge K\), then there are measurable maps \(\varphi_\lambda\) (depending on \(\lambda,T\) and \(U\)) taking values in \(\{0,1\}\) which satisfy \cref{eq:proofofexistenceoftestserroroffirstorder,eq:proofofexistenceoftestserrorofsecondorder}.

We continue by showing that  such \(\Theta_\lambda\) actually exist and also satisfy \cref{eq:remainingmassinexistenceoftestlemma}. For this, we follow \cite[\S 6.2]{waaijzanten2016}. Every \(\theta\in\mathring L^2(\TT)\) has an expansion \(\theta=\sum_{k=1}^\infty \theta_k\phi_k\) in the chosen orthonormal basis of \(\mathring L^2(\TT)\). Recall that in the aforementioned study,  \(\mathring H_1^{\alpha+1/2}\) is the set of \(\theta\in \mathring L^2(\TT)\) for which \(\sum_{k=1}^\infty k^{2\alpha+1}\theta_k^2\le 1\) and \(\mathring L^2_1(\TT)\) is the closed unit ball in \(\mathring L^2(\TT)\). We take the following sieves,	\begin{equation*}
		\Theta_\lambda=R\mathring H_1^{\alpha+1/2}(\TT)+\frac{c_\rho\eps_\lambda}{16C_\rho} \mathring L^2_1(\TT).
	\end{equation*}
Then, for some constant \(C>0\) (only depending on \(\lambda,c_\rho\) and \(C_\rho\)), 
	\begin{align*}
\Pi_s(\mathring L^2(\TT)\weg\Theta_\lambda)\le \expa{-\frac12\rh{\frac Rs-\sqrt{C(s/\eps_\lambda)^{1/\alpha}}}^2}.
	\end{align*}
	Let 
	\begin{align*}
		R &=s\rh{U\sqrt{2T}\eps_\lambda+\sqrt{C(s/\eps_\lambda)^{1/\alpha}}},
	\end{align*}
	then \(\Pi_s(\mathring L^2(\TT)\weg\Theta_\lambda)\le e^{-U^2T\eps_\lambda^2}\), hence \cref{eq:remainingmassinexistenceoftestlemma} is satisfied. 
	For some constant \(\hat C>0\) (depending on \(\lambda,c_\rho\), and \(C_\rho\)), the entropy is bounded as follows 
	\begin{equation*}
	\log N\rh{\frac{c_\rho\eps_ s}{8C_\rho},\set{\theta\in \Theta_ s:\|\theta-\theta_0\|_2\le\eps_ s},\|\sdot\|_2}\le \hat C\rh{\frac R{\eps_\lambda}}^{2/(1+2\alpha)}.\end{equation*}
	For some constant \(\tilde C>0\), this is bounded by
	\begin{align*}
		&\tilde C\rh{s^{\frac2{1+2\alpha}}U^{\frac2{1+2\alpha}}T^{\frac1{1+2\alpha}}\bigvee s^{1/\alpha}\eps_\lambda^{-1/\alpha}}\\
	= & \tilde C T\eps_\lambda^2\rh{s^{\frac2{1+2\alpha}}U^{\frac2{1+2\alpha}}T^{-\frac{2\alpha}{1+2\alpha}}\eps_\lambda^{-2}\bigvee T^{-1}s^{1/\alpha}\eps_\lambda^{-\frac{1+2\alpha}\alpha}}
	\end{align*}
	 Inserting the lower bound of \cref{eq:lowerboundonepslambda} shows that this is bounded for some constant \(\bar C>0\), (only depending on \(c_\rho,C_\rho\), and \(\lambda\)) by 	 \begin{align*}
	 	\bar C T\eps_\lambda^2\rh{(UK)^{\frac2{1+2\alpha}}\bigvee K^{1/\alpha}}.
	 \end{align*}
	 This is for \(U\ge K\) bounded by \begin{equation}\label{eq:pointwherdeltaisneeded}
	 \bar  CK^{\frac{2-4\alpha}{1+2\alpha}}U^2T\eps_\lambda^2\le \bar  CK^{\frac{-4\delta}{2+2\delta}}U^2T\eps_\lambda^2\le  cU^2T\eps_\lambda^2,
	 \end{equation}
	  for sufficiently large \(K\), which establish \cref{eq:empiricalbayessufficientconditionforexistingoftestsintermsofentropy}. 
\end{proof}
	\section{Discussion}
	We examined posterior contraction rates for priors whose hyper-parameter(s) are estimated using the marginal maximum likelihood estimator. It is shown that estimating the smoothness and scaling parameter at the same time is advantageous in terms of the guaranteed adaptive range.
	
	 It would be nice to investigate ways to work around the requirement that \(\alpha\) is bounded away from a half, which seems artificial to us, because in hierarchical Bayes one obtains adaptivity to every \(\beta>0\) by putting a specific prior on \(\alpha\), which is supported on \([0, \log T]\), see \cite[theorem 3.4]{waaijzanten2016}. We do need the condition  \(\alpha\ge 1/2+\delta\) to satisfy the entropy condition in the proof of the existence of tests, \cref{eq:pointwherdeltaisneeded}.  A similar phenomenon occurs in \cite[proposition 3.5]{rousseauszabo2017}, where they need \(\alpha>1/2 \) and \(\beta>1/2\) for the same prior in the log-linear model. This may be due to  the fact that in the chosen approach for  the proofs, one not only needs a lower bound on the amount of prior mass around the true parameter, as in hierarchical Bayes, but also an upper bound on the prior mass, see \cref{eq:definingpropertyepslambda} in this study and equations 2.1 and 2.2 in \cite{rousseauszabo2017}.
	
	It may also be worthwhile investigating rates of convergence for the MLL estimator on the number of basis functions \(J\), instead of using a fixed \(J=\floor T \) basis functions, probably in combination with an estimator of the scaling parameter. In the hierarchical approach truncating the prior at a random \(J\), via a hyperprior on \(J\in\NN\) (in combination with a prior on the scaling), gave both good asymptotic and numerical  results in \cite{waaijzanten2017} and \cite{Moritz}, respectively. 
	
	In \cite{waaijzanten2016}, the authors use  the infinite series \cref{eq:seriesexpansion}, while we truncate at \(\floor T\). The reason is to control the uniform convergence over all posteriors in \(\Lambda_0\). For this we use the bound \cref{eq:pointwhereIneedtotruncateatT}, which grows to infinity when the number of basis functions grows to infinity. The advantage of \cref{eq:seriesexpansion} is only theoretical, the prior with the law of \cref{eq:seriesexpansion} does not depend on time, which makes the posterior a martingale. For case 1 with a continuous set \(\Lambda\), and in cases 1, 2, and 3 with a discrete set in \cref{thm:posteriorcontractiontheorem}, one can use an infinite series prior, as \(|\alpha-\alpha_h|=0\) in that case. 
	However, for simplicity, we decided to omit this. This is available from the author on request. In addition, due to limited computer memory one has to truncate in applications. 
\paragraph{Acknowledgment}
The author is grateful for the helpful comments from Harry van Zanten regarding an earlier version of this paper.
\end{document}